\title{The BMR freeness conjecture for the 2-reflection groups}
\author{Ivan Marin}
\author{G\"otz Pfeiffer}
\date{January 9, 2016}
\address{I.M.: LAMFA, Universit\'e de Picardie Jules Verne, 33 rue Saint Leu, 80039 Amiens, France}
\email{ivan.marin@u-picardie.fr}
\address{G.P: School of Mathematics, Statistics and Applied Mathematics, NUI Galway, University Road, Galway, Ireland}
\email{goetz.pfeiffer@nuigalway.ie}
\let\emptyset\varnothing
\let\ams@starttoc\@starttoc
\let\@starttoc\ams@starttoc
\patchcmd{\@starttoc}{\makeatletter}{\makeatletter\parskip\z@}{}{}
\newtheorem{theorem}{Theorem}[section]
\newtheorem{Lemma}[theorem]{Lemma}
\newtheorem{cor}[theorem]{Corollary}
\newtheorem{prop}[theorem]{Proposition}
\newtheorem*{conj}{Conjecture}
\newcommand{\Z}{\mathbf{Z}}
\newcommand{\Fail}{\mbox{\!{\Large $\times$}}} 
\newcommand{\revert}{\mathrm{revert}}
\newcommand{\Size}[1]{\left| #1 \right|}
\newcommand{\Span}[1]{\left< #1 \right>}
\tikzset{every picture/.style = {inner sep=0pt,baseline}}
\tikzset{p/.style = {draw, shape          = circle,
                                 text           = black,
                                 font=\tiny,
                                 inner sep      = 0pt,
                                 outer sep      = 0pt,
                                 minimum size   = 1pt}}
\tikzset{r/.style = {red, very thin}}
\tikzset{g/.style = {green, very thin}}
\tikzset{b/.style = {blue, very thin}}
\tikzset{o/.style = {orange, very thin}}
\tikzset{u/.style = {black, very thin}}
\numberwithin{equation}{section}
\begin{document}

\begin{abstract} 
  We prove the freeness conjecture of Brou\'e, Malle and Rouquier for
  the Hecke algebras associated to the primitive complex
  2-reflection groups with a single conjugacy class of reflections.
\end{abstract}

\maketitle

\tableofcontents

\section{Introduction}

We prove several new cases of the freeness conjecture for the generic Hecke algebras associated to
complex reflection groups (sometimes called : cyclotomic Hecke algebras), including all 2-reflection groups (of exceptional types). Recall that, when $W$
is a finite reflection group over the real numbers, that is to say a finite Coxeter group, the Iwahori-Hecke algebra $H$ associated to it can be defined
as a quotient of the group algebra $\Z[q,q^{-1}]B$ of the braid group $B$ associated to $W$ -- which is also known in this setup as an Artin group, or
Artin-Tits group, or Artin-Brieskorn group. This is the quotient by the relations $(s+1)(s-q) = 0$, where $s$ runs among the
natural generators of $B$ -- or equivalently all their conjugates in $B$. These conjugates are called braided reflections.

In the more general setting of complex reflection groups, there is a natural geometric description of these braided reflections, as well as a topological
description of the braid group $B$, described in \cite{BMR}. In case $W$ is generated by (pseudo-)reflections of order more than $2$,
or if $W$ admits several reflection classes (aka conjugacy classes of reflections)
the ring $\Z[q,q^{-1}]$ needs to be replaced by a larger ring. However, since the groups we are interested in are
generated by reflections of order $2$ -- although they can not be realized inside a real form of the vector space -- and have a single reflection class we can and will restrict to this case.
A conjecture of Brou\'e, Malle and Rouquier
in \cite{BMR} then states the following. 

\begin{conj}
The Hecke algebra $H$ defined as the quotient of $\Z[q,q^{-1}] B$ by the relations  $(s+1)(s-q) = 0$ where $s$
runs among the braided reflections of $B$ is a free $\Z[q,q^{-1}]$-module of rank the order $|W|$ of $W$.
\end{conj}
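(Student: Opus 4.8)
The plan is to dispose of the finitely many groups covered by the statement one at a time, reducing each to a module-theoretic assertion that can be checked by induction and computation. For the real exceptional groups $H$ is an Iwahori--Hecke algebra and the result is classical, so the genuinely new cases are the non-real primitive $2$-reflection groups $G_{24},G_{27},G_{29},G_{33},G_{34}$. Write $R=\Z[q,q^{-1}]$ and fix the BMR presentation of $B$ on generators $s_1,\dots,s_n$ with $n=\rk W$, so that $H$ is the associative $R$-algebra on $s_1^{\pm1},\dots,s_n^{\pm1}$ subject to the braid relations together with $s_i^2=(q-1)s_i+q$. Specialising $q=1$ collapses the presentation to that of $W$, giving $H/(q-1)H\cong\Z[W]$, so the rank can never be below $|W|$; and over the fraction field $\mathbf{Q}(q)$ the opposite bound $\dim_{\mathbf{Q}(q)} H\otimes_R\mathbf{Q}(q)\ge|W|$ is available from the known construction of the irreducible representations of these algebras, whose squared dimensions already sum to $|W|$. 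Granting this, it suffices to produce an $R$-spanning set of size exactly $|W|$: after extension to $\mathbf{Q}(q)$ such a set must be a basis, hence is $R$-linearly independent, and $H$ is free of rank $|W|$.

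The hard part is thus the upper bound, which I would obtain by induction on $\rk W$ through maximal parabolic subgroups. Choose a maximal parabolic $W'<W$; by Steinberg's theorem it is again a reflection group, generated by a subset of the $s_i$, and by the inductive hypothesis its Hecke algebra is free of rank $|W'|$, the natural map onto the subalgebra $H'\subseteq H$ it generates being an isomorphism. The goal is to prove that $H$ is \emph{free as a right $H'$-module} on a family $U=\{u_1,\dots,u_m\}$ of $m=[W:W']$ elements of $B$ lifting coset representatives; this gives $\rk_R H=m\,|W'|=|W|$. Concretely I would form the right $H'$-submodule $M=\sum_j u_jH'$ and show $M=H$ by checking that left multiplication preserves $M$: for each generator $s_i$ and each $u_j$ one rewrites $s_iu_j$, using only the braid and quadratic relations, as $\sum_k u_k\,h_{kj}$ with $h_{kj}\in H'$. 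Since $1\in M$ and closure under the $s_i$ forces closure under the $s_i^{-1}$ (via $s_i^{-1}=q^{-1}(s_i-(q-1))$), this makes $M$ stable under all of $H$, so $H=H\cdot1\subseteq M$ and $M=H$, producing the desired spanning set of size $|W|$.

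Each closure check is a finite rewriting problem, and the decisive gain from the parabolic filtration is that the index $m=[W:W']$ stays small even when $|W|$ is enormous: for $G_{34}$ one may take $W'=G_{33}$, where $m=756$, so only a few hundred identities (one per generator and coset representative), each an element of the already-understood algebra $H'$, need to be certified. These I would carry out in GAP: build the braid presentation, implement a normal form writing any word as $\sum_k u_kh_k$ with the $h_k$ expressed in the fixed basis of $H'$, and verify that $U$ closes up. The main obstacle is twofold. On the structural side one must choose the lifts $U$ so that $M$ genuinely closes---a careless choice fails to be stable and pushes the span above $|W|$---and one must settle the rank-$3$ base cases $G_{24},G_{27}$ directly to seed the induction. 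On the computational side the obstacle is the scale and trustworthiness of the verification for $G_{33}$ and especially $G_{34}$: keeping the $R$-coefficients and the $H'$-arithmetic under control, and organising the calculation so that its outcome can be independently confirmed. Once closure is certified for every group on the list, the reduction of the first paragraph yields freeness of rank $|W|$ in each case, establishing the conjecture for all exceptional $2$-reflection groups with a single reflection class.
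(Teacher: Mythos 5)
Your overall strategy is the one the paper follows: reduce the conjecture to exhibiting a spanning set of $|W|$ elements (Proposition~\ref{propHeckeBMR}, quoted from Brou\'e--Malle--Rouquier), pick a maximal parabolic subgroup whose Hecke algebra is already under control, and verify by machine-assisted rewriting that $H$ is the sum of $[W:W_0]$ translates of $H_0$, including the two-layer reduction of $G_{34}$ to a parabolic of type $G_{33}$. The closure computation you describe (rewrite $x\cdot s$ as an $H_0$-combination of coset representatives, using the braid relations, the quadratic relation, and inversion of already-filled entries) is exactly what Sections~\ref{sec:G24}--\ref{sec:proof} carry out.

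There are, however, two genuine gaps. First, your list of ``genuinely new cases'' is wrong: $G_{12}$, $G_{22}$ and $G_{31}$ are also primitive, non-real $2$-reflection groups with a single reflection class, so they are not covered by classical Iwahori--Hecke theory and must be handled by the same method; as written your proposal simply does not treat them. $G_{31}$ is in fact the most delicate of the lot --- it is badly generated (five generators in rank four), its word problem is solved only via an embedding into the Artin group of type $E_8$, its Coxeter parabolic $A_3$ has index $1920$, and extra braid relations $\mathcal{R}_{31}$ are needed for the rewriting to terminate. Second, you assert that the Hecke algebra of the parabolic maps isomorphically onto the subalgebra $H'\subseteq H$ it generates, and you invoke this as part of an induction. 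That injectivity is not known a priori (it is a consequence of the theorem, not an ingredient), and the argument must be arranged to avoid it: one only needs that $H$ is an $H'$-module generated by $[W:W']$ elements and that $H'$ is a quotient of an algebra spanned by $|W'|$ elements over $\Z[q,q^{-1}]$, whence $H$ is spanned by $|W|$ elements and Proposition~\ref{propHeckeBMR} applies; freeness of $H$ over $H'$, and injectivity, come afterwards. Relatedly, ``a maximal parabolic is generated by a subset of the $s_i$'' does not follow from Steinberg's theorem for an arbitrary presentation --- the $D_4$ parabolic of $G_{33}$ is invisible in the standard presentation --- which is why the paper fixes, group by group, a Coxeter parabolic realized by a subset of the chosen generators (Lemma~\ref{lem;parab}) rather than running a blind induction on rank.
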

We refer the reader to \cite{CYCLO} for the state-of-the-art of this conjecture, as well as the proof that this formulation of the conjecture is equivalent to
a few others (see proposition 2.9 there). We only mention the following important fact, originally proved in \cite{BMR}.

\begin{prop} \label{propHeckeBMR} In order to prove the conjecture for $W$ it is sufficient to show that $H$ is spanned by $|W|$ elements.
\end{prop}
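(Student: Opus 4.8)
The plan is to deduce freeness from the spanning hypothesis by comparing the rank of $H$ at the generic point of $\mathrm{Spec}\,R$ with its behaviour under the specialization $q\mapsto 1$. Write $R=\Z[q,q^{-1}]$, let $K=\mathbf{Q}(q)$ be its field of fractions, and set $N=|W|$ and $H_K=H\otimes_R K$. A spanning set of size $N$ is exactly a surjection of $R$-modules $\phi\colon R^N\twoheadrightarrow H$; since localization at $K$ is exact, $\phi_K\colon K^N\to H_K$ is still surjective, so $\dim_K H_K\le N$. The whole argument then reduces to establishing the reverse inequality $\dim_K H_K\ge N$, after which the kernel of $\phi$ can be removed by a torsion argument.

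First I would record the behaviour of $H$ under the specialization $\theta\colon R\to\mathbf{C}$, $q\mapsto 1$. The defining relations $(s+1)(s-q)=0$ become $s^2=1$, and since the braided reflections $s$ generate $B$ while their images generate $W$ with $W=B/\langle\langle s^2\rangle\rangle$, one obtains a canonical isomorphism $H\otimes_{R,\theta}\mathbf{C}\cong\mathbf{C}[W]$, of dimension $N$. This identifies the special fibre of $H$ with the (semisimple, by Maschke) group algebra of $W$, and anchors the rank computation.

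The key step, and the main obstacle, is the lower bound $\dim_K H_K\ge N$; note that it cannot come from the spanning generators, nor formally from the special fibre, since upper semicontinuity of fibre dimension only bounds the generic rank from above. Instead I would extract it from the semisimplicity of $\mathbf{C}[W]$: each irreducible representation $\rho_i$ of $W$, of dimension $d_i$, deforms along $H$ — separable algebras being rigid, the $\rho_i$ lift to representations over the completed local ring at $q=1$ and extend to $\overline{K}$ — yielding pairwise non-isomorphic irreducible representations of $H_{\overline{K}}$ of dimensions $d_i$. Their existence forces $\dim_{\overline{K}}H_{\overline{K}}\ge\sum_i d_i^2=|W|=N$, a dimension unchanged by the extension $K\subseteq\overline{K}$. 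This is the genuinely non-formal input; it is classical and due to \cite{BMR}, whereas the matching upper bound — equivalently the production of a spanning set of size $N$ — is exactly what the rest of the paper must establish for each group.

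With both inequalities in hand, $\dim_K H_K=N$, so in the exact sequence $0\to\ker\phi\to R^N\xrightarrow{\phi}H\to 0$ the (flat) base change to $K$ gives $\dim_K(\ker\phi\otimes_R K)=N-\dim_K H_K=0$; thus $\ker\phi$ is an $R$-torsion module. On the other hand $\ker\phi$ is a submodule of the free module $R^N$, hence torsion-free. A module that is simultaneously torsion and torsion-free vanishes, so $\ker\phi=0$ and $\phi$ is an isomorphism. Therefore $H\cong R^N$ is free of rank $N=|W|$, which is the assertion of the conjecture.
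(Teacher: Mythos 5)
The paper offers no proof of this proposition at all --- it is stated as a fact ``originally proved in \cite{BMR}'', with \cite{CYCLO} cited for equivalent formulations --- and your argument is exactly the standard one behind that citation: the spanning hypothesis gives the generic upper bound $\dim_K H_K\le |W|$, a Tits-type deformation of the split semisimple special fibre $\mathbf{C}[W]$ at $q=1$ gives the lower bound, and the torsion/torsion-free dichotomy kills the kernel of $R^{|W|}\twoheadrightarrow H$. This is correct as written (the one point worth making explicit is that lifting the irreducible representations of $\mathbf{C}[W]$ to the completion at $q=1$ already uses the finite generation of $H$ supplied by the spanning hypothesis, and that the lifted representations stay irreducible and pairwise non-isomorphic over the fraction field by Nakayama and reduction of characters), so rather than diverging from the paper you have reconstructed the proof it delegates to \cite{BMR}.
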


We state our main result.
\begin{theorem} \label{th:maintheo} All primitive irreducible complex 2-reflection groups with a single reflection class 
satisfy the freeness conjecture, namely
$H$ is a free $\Z[q,q^{-1}]$ module of rank $|W|$ for these groups.
\end{theorem}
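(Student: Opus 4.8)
By Proposition~\ref{propHeckeBMR}, it is enough to exhibit, for each group $W$ in the list, a generating set of the $\Z[q,q^{-1}]$-module $H$ having exactly $\Size{W}$ elements. The first reduction is to notice that among the primitive irreducible complex $2$-reflection groups with a single reflection class, the Coxeter ones (namely $H_3$, $H_4$, $E_6$, $E_7$, $E_8$) already satisfy the conjecture by the classical freeness of Iwahori--Hecke algebras; the genuinely new cases are therefore the five groups $G_{24}$, $G_{27}$, $G_{29}$, $G_{33}$, $G_{34}$. The plan is to treat these by induction on the rank, the base cases being the parabolic subgroups of rank $\leq 2$. Here a rank-$2$ parabolic is generated by two reflections of order $2$, hence is a real dihedral group $I_2(m)$, so all base cases have free Hecke algebras by the classical result.

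For the inductive step I would fix a maximal parabolic subgroup $W' < W$ whose braid group $B'$ embeds in $B$ and whose Hecke algebra $H'$ is, by induction (or by the Coxeter case), free of rank $\Size{W'}$. Choosing a system $w_1,\dots,w_m$ of representatives for the left cosets $W/W'$, with $w_1 = 1$ and $m = [W:W']$, I lift each $w_i$ to a braid $\beta_i \in B$ and denote again by $\beta_i$ its image in $H$. I then set $M = \sum_{i=1}^m \beta_i H'$, the right $H'$-submodule of $H$ generated by the $\beta_i$; it is spanned by at most $\Size{W'}\cdot m = \Size{W}$ elements, so the whole theorem reduces to proving $M = H$.

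The key point making this tractable is that $M=H$ follows from a \emph{finite} closure check. Indeed, since $M$ is a right $H'$-module and $\beta_i = \beta_i\cdot 1 \in M$, stability of $M$ under left multiplication by a generating braided reflection $s$ of $B$ is equivalent to the single-layer condition $s\,\beta_i \in M$ for $i=1,\dots,m$: if $s\,\beta_i\in M$ for all $i$, then $s\,\beta_i h = (s\,\beta_i)h \in M H' \subseteq M$ for every $h\in H'$. As $H$ is generated as an algebra by the finitely many generating braided reflections $s$ (their inverses being recovered from the quadratic relation, $s^{-1}\in\Span{1,s}$) and $1\in M$, once $M$ is stable under each such $s$ we obtain $H = H\cdot 1 \subseteq M$, hence $M=H$. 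Thus everything comes down to rewriting, for each generator $s$ and each representative $\beta_i$, the product $s\,\beta_i$ as a $\Z[q,q^{-1}]$-combination $\sum_k \beta_k h_k$ with $h_k\in H'$ expressed in a fixed basis: one passes $s$ across $\beta_i$ using the braid relations and uses the quadratic relations $s^2=(q-1)s+q$ to reduce the length back to the chosen coset representatives, absorbing the remainder into $H'$.

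The main obstacle is to carry out, and rigorously certify, this closure computation for the largest group $G_{34}$, of rank $6$ and order $39\,191\,040$. Even with the reduction above, taking $W'=G_{33}$ one must reduce about $6\times 756$ products (the number of generators times the index $[G_{34}:G_{33}]=756$), each living in, and being reduced against a basis of, the rank-$5$ Hecke algebra $H'$ of dimension $51\,840$. The difficulty is not conceptual but one of effectivity and reliability: selecting the parabolic chain and the representatives $\beta_i$ so that the rewriting terminates and never leaves $M$, organizing the normal-form computation in the braid group so that it stays feasible in time and memory, and keeping all arithmetic exact over $\Z[q,q^{-1}]$ so that the machine verification genuinely constitutes a proof. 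I expect the bulk of the work, and the part requiring computer assistance, to be exactly this verification for $G_{33}$ and $G_{34}$, possibly through an intermediate parabolic step to keep the individual computations within reach.
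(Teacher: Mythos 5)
Your overall strategy coincides with the paper's: reduce via Proposition~\ref{propHeckeBMR} to exhibiting $\Size{W}$ spanning elements, choose a maximal parabolic $W'$ whose Hecke algebra $H'$ is already known to be free, and prove that the $H'$-submodule spanned by lifts of the $[W:W']$ coset representatives is all of $H$ by a finite closure check on the products of representatives with generators (the paper uses right cosets and shows $\sum_l H_0x_l$ is a right ideal, the mirror image of your left-handed setup); the two-layer treatment of $G_{34}$ through its parabolic $G_{33}$ of index $756$ is also exactly what the paper does. The genuine gap is in your enumeration of cases. The non-Coxeter primitive irreducible complex $2$-reflection groups with a single reflection class are \emph{eight} in number: $G_{12}$, $G_{22}$, $G_{24}$, $G_{27}$, $G_{29}$, $G_{31}$, $G_{33}$ and $G_{34}$. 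Your list omits the two rank-$2$ groups $G_{12}$ and $G_{22}$ (which are primitive, generated by order-$2$ reflections forming a single class, and are not Coxeter groups) and, more seriously, $G_{31}$. The latter is not a formality: it has rank $4$ but is badly generated (five generating reflections are needed), its maximal Coxeter parabolic is only of type $A_3$ so the index is $1920$, and in the paper the closure computation for it succeeds only after adjoining the extra braid relations $\mathcal{R}_{31}$. As written, your argument does not prove the theorem for these three groups.

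Two secondary points. First, your base case is wrong as stated: a rank-$2$ parabolic of a complex reflection group is generated by \emph{all} the reflections it contains, not by two of them, and non-Coxeter rank-$2$ parabolics of type $G(de,e,2)$ do occur; what actually makes the induction start is the classification fact, isolated as Lemma~\ref{lem;parab} in the paper, that each group on the list admits a maximal parabolic which is a Coxeter group ($B_2$, $B_2$, $B_3$, $A_3$, $A_4$ or $D_4$, and $A_1$ for the rank-$2$ cases), so you should invoke that rather than descend all the way to rank $2$. Second, the closure check is not the routine matter of ``passing $s$ across $\beta_i$ using the braid relations'': the paper must supplement the standard presentations with additional identities in $B$ (found heuristically and certified via Garside normal forms in the dual braid monoid), and must combine cyclic expansions of generators with the inversion Lemma~\ref{la:l.s} to fill the table; without such extra relations the procedure genuinely fails to terminate for several groups and orderings. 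You correctly identify $G_{34}$ as the computational bottleneck, but the need for these auxiliary relations is a real mathematical ingredient, not merely an implementation concern.
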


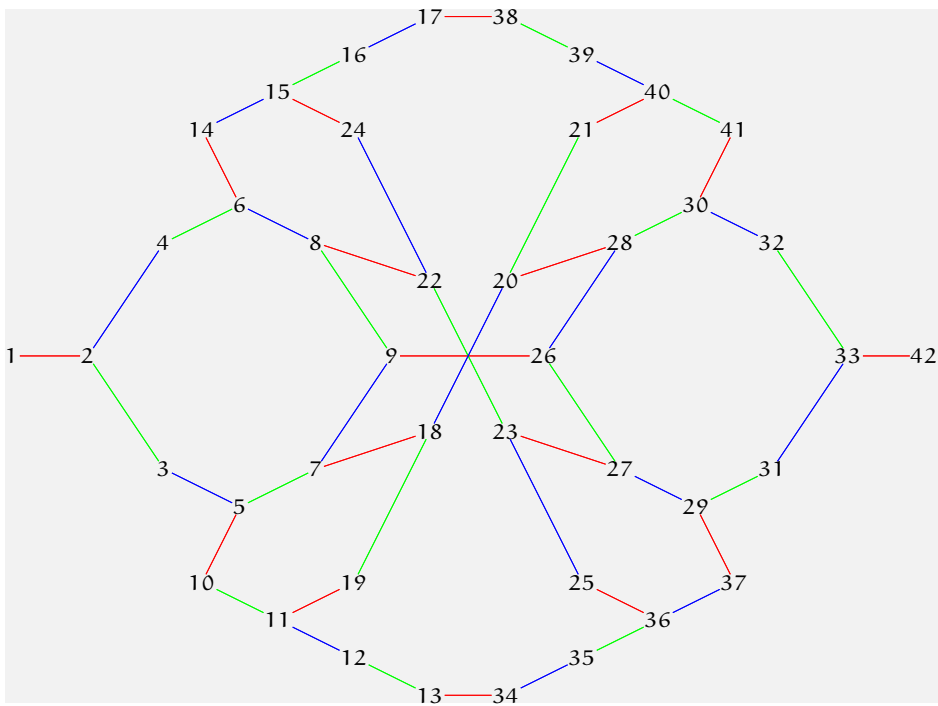
\begin{figure}[ht]
\begin{center}
\begin{tikzpicture}
\draw (0,0) node (1) {$_{1}$};
\draw (1) ++ (1,0) node (2) {$_2$};
\draw (2) ++ (1,-1.5) node (3) {$_3$};
\draw (2) ++ (1,1.5) node (4) {$_4$};
\draw (2) ++ (2,-2) node (5) {$_5$};
\draw (2) ++ (2,2) node (6) {$_6$};
\draw (2) ++ (3,-1.5) node (7) {$_7$};
\draw (2) ++ (3,1.5) node (8) {$_8$};
\draw (2) ++ (4,0) node (9) {$_9$};

\draw (9) ++ (2,0) node (26) {$_{26}$};
\draw (26) ++ (1,-1.5) node (27) {$_{27}$};
\draw (26) ++ (1,1.5) node (28) {$_{28}$};
\draw (26) ++ (2,-2) node (29) {$_{29}$};
\draw (26) ++ (2,2) node (30) {$_{30}$};
\draw (26) ++ (3,-1.5) node (31) {$_{31}$};
\draw (26) ++ (3,1.5) node (32) {$_{32}$};
\draw (26) ++ (4,0) node (33) {$_{33}$};

\draw (33) ++ (1,0) node (42) {$_{42}$};

\draw (5) ++ (-0.5,-1) node (10) {$_{10}$};
\draw (10) ++ (1,-0.5) node (11) {$_{11}$};
\draw (11) ++ (1,-0.5) node (12) {$_{12}$};
\draw (12) ++ (1,-0.5) node (13) {$_{13}$};

\draw (6) ++ (-0.5,1) node (14) {$_{14}$};
\draw (14) ++ (1,0.5) node (15) {$_{15}$};
\draw (15) ++ (1,0.5) node (16) {$_{16}$};
\draw (16) ++ (1,0.5) node (17) {$_{17}$};

\draw (9) ++ (0.5,-1) node (18) {$_{18}$};
\draw (18) ++ (-1,-2) node(19) {$_{19}$};
\draw (26) ++ (-0.5,1) node (20) {$_{20}$};
\draw (20) ++ (1,2) node (21) {$_{21}$};

\draw (13) ++ (1,0) node (34) {$_{34}$};
\draw (34) ++ (1,0.5) node (35) {$_{35}$};
\draw (35) ++ (1,0.5) node (36) {$_{36}$};
\draw (36) ++ (1,0.5) node (37) {$_{37}$};

\draw (17) ++ (1,0) node (38) {$_{38}$};
\draw (38) ++ (1,-0.5) node (39) {$_{39}$};
\draw (39) ++ (1,-0.5) node (40) {$_{40}$};
\draw (40) ++ (1,-0.5) node (41) {$_{41}$};

\draw (18) ++ (1,0) node (23) {$_{23}$};
\draw (23) ++ (1,-2) node (25) {$_{25}$};
\draw (23) ++ (-1,2) node (22) {$_{22}$};
\draw (22) ++ (-1,2) node (24) {$_{24}$};

\draw[r] (1) -- (2);
\draw[r] (2) -- (1);
\draw[r] (5) -- (10);
\draw[r] (6) -- (14);
\draw[r] (7) -- (18);
\draw[r] (8) -- (22);
\draw[r] (9) -- (26);
\draw[r] (10) -- (5);
\draw[r] (11) -- (19);
\draw[r] (13) -- (34);
\draw[r] (14) -- (6);
\draw[r] (15) -- (24);
\draw[r] (17) -- (38);
\draw[r] (18) -- (7);
\draw[r] (19) -- (11);
\draw[r] (20) -- (28);
\draw[r] (21) -- (40);
\draw[r] (22) -- (8);
\draw[r] (23) -- (27);
\draw[r] (24) -- (15);
\draw[r] (25) -- (36);
\draw[r] (26) -- (9);
\draw[r] (27) -- (23);
\draw[r] (28) -- (20);
\draw[r] (29) -- (37);
\draw[r] (30) -- (41);
\draw[r] (33) -- (42);
\draw[r] (34) -- (13);
\draw[r] (36) -- (25);
\draw[r] (37) -- (29);
\draw[r] (38) -- (17);
\draw[r] (40) -- (21);
\draw[r] (41) -- (30);
\draw[r] (42) -- (33);

\draw[g] (2) -- (3);
\draw[g] (3) -- (2);
\draw[g] (4) -- (6);
\draw[g] (5) -- (7);
\draw[g] (6) -- (4);
\draw[g] (7) -- (5);
\draw[g] (8) -- (9);
\draw[g] (9) -- (8);
\draw[g] (10) -- (11);
\draw[g] (11) -- (10);
\draw[g] (12) -- (13);
\draw[g] (13) -- (12);
\draw[g] (15) -- (16);
\draw[g] (16) -- (15);
\draw[g] (18) -- (19);
\draw[g] (19) -- (18);
\draw[g] (20) -- (21);
\draw[g] (21) -- (20);
\draw[g] (22) -- (23);
\draw[g] (23) -- (22);
\draw[g] (26) -- (27);
\draw[g] (27) -- (26);
\draw[g] (28) -- (30);
\draw[g] (29) -- (31);
\draw[g] (30) -- (28);
\draw[g] (31) -- (29);
\draw[g] (32) -- (33);
\draw[g] (33) -- (32);
\draw[g] (35) -- (36);
\draw[g] (36) -- (35);
\draw[g] (38) -- (39);
\draw[g] (39) -- (38);
\draw[g] (40) -- (41);
\draw[g] (41) -- (40);

\draw[b] (2) -- (4);
\draw[b] (3) -- (5);
\draw[b] (4) -- (2);
\draw[b] (5) -- (3);
\draw[b] (6) -- (8);
\draw[b] (7) -- (9);
\draw[b] (8) -- (6);
\draw[b] (9) -- (7);
\draw[b] (11) -- (12);
\draw[b] (12) -- (11);
\draw[b] (14) -- (15);
\draw[b] (15) -- (14);
\draw[b] (16) -- (17);
\draw[b] (17) -- (16);
\draw[b] (18) -- (20);
\draw[b] (20) -- (18);
\draw[b] (22) -- (24);
\draw[b] (23) -- (25);
\draw[b] (24) -- (22);
\draw[b] (25) -- (23);
\draw[b] (26) -- (28);
\draw[b] (27) -- (29);
\draw[b] (28) -- (26);
\draw[b] (29) -- (27);
\draw[b] (30) -- (32);
\draw[b] (31) -- (33);
\draw[b] (32) -- (30);
\draw[b] (33) -- (31);
\draw[b] (34) -- (35);
\draw[b] (35) -- (34);
\draw[b] (36) -- (37);
\draw[b] (37) -- (36);
\draw[b] (39) -- (40);
\draw[b] (40) -- (39);
\begin{pgfonlayer}{background}
\node [fill=black!5,fit=(current bounding box.north west) (current bounding box.south east)] {};
\end{pgfonlayer}
\end{tikzpicture}
\end{center}
\caption{The coset graph for $G_{24}$}
\label{figcosetgraphG24}
\end{figure}

In Shephard and Todd notation, this statement covers the groups $G_{12}$, $G_{22}$, $G_{24}$, $G_{27}$, $G_{29}$, $G_{31}$, $G_{33}$ and $G_{34}$.

Together with previous results, this theorem admits several corollaries. We refer to \cite{CYCLO} or \cite{BMR} for the general
statement of the BMR freeness conjecture we are referring to in these corollaries. We also recall that this conjecture was already
known to hold for the general series of imprimitive complex reflection groups when it was stated (see \cite{BMR}, theorem 4.24). Therefore, one only
needs to focus on primitive, exceptional reflection groups.

First of all, it has been recently proved by E. Chavli in her thesis \cite{CHAVLI} that the group $G_{13}$, which is generated by 2-reflections but has two reflection classes, satisfies the conjecture. 
This group is the only primitive 2-reflection group having more than one reflection class.
Therefore, we get the following corollary.
\begin{cor} 
Every irreducible complex 2-reflection group satisfies the freeness conjecture.
\end{cor}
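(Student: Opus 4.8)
The corollary is a classification-and-assembly statement rather than a new computation, so the plan is to run through the Shephard--Todd list of irreducible complex reflection groups and invoke, case by case, a freeness result that is either already in the literature or is furnished by Theorem~\ref{th:maintheo}. First I would recall that, by the Shephard--Todd classification, an irreducible complex reflection group $W$ is either a member of the infinite imprimitive series $G(de,e,r)$ or one of the finitely many primitive (exceptional) groups. For the imprimitive series the BMR freeness conjecture is already known, by \cite{BMR} (theorem~4.24); in particular it holds for every imprimitive 2-reflection group, such as the groups $G(e,e,r)$. This disposes of all imprimitive cases at once, so that one is reduced to the primitive 2-reflection groups.

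Next I would isolate, among the primitive groups, those that admit a real form from those that are genuinely complex. The primitive 2-reflection groups that are finite Coxeter groups are $H_3$, $F_4$, $H_4$, $E_6$, $E_7$ and $E_8$; for these the algebra $H$ is the classical Iwahori--Hecke algebra over $\Z[q,q^{-1}]$, which is free of rank $|W|$ with the usual basis indexed by $W$, so the conjecture is classical here. This is also the step that absorbs $F_4$, which, unlike the other cases, carries two reflection classes but is handled by the well-known real theory. After removing these, the primitive 2-reflection groups that remain are exactly the non-real ones, namely $G_{12}$, $G_{13}$, $G_{22}$, $G_{24}$, $G_{27}$, $G_{29}$, $G_{31}$, $G_{33}$ and $G_{34}$.

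Finally I would split this last list by the number of reflection classes. Each of $G_{12}$, $G_{22}$, $G_{24}$, $G_{27}$, $G_{29}$, $G_{31}$, $G_{33}$ and $G_{34}$ has a single reflection class and is therefore covered directly by Theorem~\ref{th:maintheo}, while the sole remaining group, $G_{13}$, has two reflection classes and is settled by Chavli's theorem in \cite{CHAVLI}. Since every irreducible complex 2-reflection group thus falls into one of these four families, the corollary follows. The real mathematical difficulty is of course concentrated upstream, in Theorem~\ref{th:maintheo} and in Chavli's result; within the corollary itself the only point demanding care is to verify that the enumeration is genuinely exhaustive, and in particular that no other entry of the Shephard--Todd list is generated by reflections of order $2$ -- for instance $G_{32}$, generated by order-$3$ reflections, must be excluded -- so that the four families together leave no gap.
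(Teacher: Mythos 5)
Your proof is correct and follows essentially the same route as the paper: the imprimitive series is covered by \cite{BMR}, the groups with a single reflection class by Theorem~\ref{th:maintheo}, and $G_{13}$ by Chavli's result \cite{CHAVLI}. You are in fact slightly more careful than the paper in explicitly disposing of the real (Coxeter) primitive cases such as $F_4$ via classical Iwahori--Hecke theory, a step the paper leaves implicit.
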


It has been proved in \cite{CUBIC} and \cite{CYCLO} that the groups $G_{25}$, $G_{26}$ and $G_{32}$ satisfy the freeness conjecture.
In addition, Etingof and Rains have proved in \cite{ETINGOFRAINS} that the groups of rank 2 satisfy the \emph{weak} freeness conjecture, namely that
their Hecke algebra is finitely generated (and therefore has the right dimension as vector space over the field of fractions of
the generic coefficients) -- see again \cite{CYCLO} for further details, and
see also the recent paper \cite{LOSEV} for more implications. As a consequence, we get the following corollary.

\begin{cor} 
Every irreducible complex reflection group satisfies the \emph{weak} freeness conjecture.
\end{cor}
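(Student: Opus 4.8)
The plan is to apply Proposition~\ref{propHeckeBMR}, which reduces the theorem to exhibiting, for each of the eight groups $G_{12},G_{22},G_{24},G_{27},G_{29},G_{31},G_{33},G_{34}$, a spanning set of $H$, as a $\Z[q,q^{-1}]$-module, with exactly $|W|$ elements. Rather than build such a set from scratch, I would proceed relative to a maximal parabolic subgroup $W_0<W$, generated by a suitable subset of the distinguished generators, chosen so that its Hecke algebra $H_0\subset H$ is already a free $\Z[q,q^{-1}]$-module of rank $|W_0|$. For a $2$-reflection group every parabolic is again a $2$-reflection group, so such a $W_0$ is covered either by the classical Iwahori--Hecke theory (when $W_0$ is a real, i.e.\ Coxeter, group), by the imprimitive case, by \cite{CHAVLI} for $G_{13}$, or by the lower-rank members of the present list handled first; the whole theorem is thus organised as an induction on the rank. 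The goal then becomes the identity
\[
  H \;=\; \sum_{i=1}^{r} H_0\, b_i ,
\]
where $b_1,\dots,b_r$ are fixed braid words lifting a transversal of the $r=|W:W_0|$ cosets in $W_0\backslash W$. Since $H_0$ is spanned by $|W_0|$ elements, this presents $H$ as spanned by $r\cdot|W_0|=|W|$ elements, which is exactly what Proposition~\ref{propHeckeBMR} asks for.

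The combinatorial backbone of the verification is a coset graph of the type drawn in Figure~\ref{figcosetgraphG24} for $G_{24}$ (there $|W_0|=8$, $r=42$, and $8\cdot 42=336=|G_{24}|$): its vertices are the $r$ cosets, and for each distinguished generator $s_j$ there is an edge of the corresponding colour from $W_0 g$ to $W_0 g s_j$. Because the reflections have order $2$, these edges come in inverse pairs, and one reads off a spanning tree rooted at the trivial coset that fixes the words $b_i$. I would compute this graph by ordinary coset enumeration in the finite group $W$. The decisive step is then to lift each purely group-theoretic incidence to an identity in $H$: for every vertex $i$ and every generator $s_j$ one must check that the product $b_i\,s_j$ already lies in $\sum_k H_0 b_k$, that is, that it can be rewritten into this form using only the braid relations of $B$ together with the quadratic relation $s_j^2=(q-1)s_j+q$ (and the consequent $s_j^{-1}=q^{-1}(s_j-(q-1))$). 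The tree edges are essentially definitional; the real content sits in the non-tree edges and in the internal relations of $W_0$, where the quadratic relation is genuinely used to fold longer words back down into an $H_0$-coset. Once closure of $M=\sum_k H_0 b_k$ under right multiplication by every $s_j$ (and hence by $s_j^{-1}$) is established, $M$ contains $1$ (the root) and is stable under the whole of $B$, so it contains every word in the generators and therefore $M=H$, completing the reduction.

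I expect the main obstacle to be precisely this closure verification, which is why the eight groups are treated case by case. It is a finite but heavy computation: for $G_{34}$, where $|W|=39\,283\,200$, the parabolic $W_0$ must be taken as large as possible to keep the number of cosets $r$, and hence the number of words $b_i$ and edges, within reach, and even so the rewriting of a single product $b_i s_j$ may require many applications of the braid relations before it collapses into the normal form. The genuine difficulties are organisational: designing a rewriting procedure that provably terminates, controlling the growth of the intermediate $\Z[q,q^{-1}]$-linear combinations, and certifying that the $b_i$ remain free over $H_0$, so that the count $r\cdot|W_0|$ does not secretly collapse below $|W|$ (by the cited proposition together with the standard specialisation $q\mapsto 1$ this cannot ultimately happen, but it must be navigated cleanly in the bookkeeping). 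This is naturally carried out with computer assistance, the coset graph serving as the human-readable certificate that $M$ is closed.
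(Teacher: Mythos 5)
There is a genuine gap: you have written a (reasonable) outline of the proof of Theorem~\ref{th:maintheo}, but the statement at hand is the corollary that \emph{every} irreducible complex reflection group satisfies the \emph{weak} freeness conjecture, and your argument never leaves the eight primitive $2$-reflection groups $G_{12},G_{22},G_{24},G_{27},G_{29},G_{31},G_{33},G_{34}$. The corollary is a classification-assembly statement, and the paper proves it by combining the main theorem with previously known results covering all the remaining cases: the imprimitive series $G(de,e,n)$ (already settled in \cite{BMR}), the finite Coxeter groups (classical Iwahori--Hecke theory), the groups $G_{25}$, $G_{26}$, $G_{32}$ (proved in \cite{CUBIC} and \cite{CYCLO}), the group $G_{13}$ (proved in \cite{CHAVLI}), and --- crucially for the \emph{weak} form --- the result of Etingof and Rains \cite{ETINGOFRAINS} that all rank-$2$ exceptional groups satisfy the weak freeness conjecture. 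Without this last ingredient your argument says nothing about, e.g., $G_4,\dots,G_{11}$ or $G_{14},\dots,G_{21}$, which are not $2$-reflection groups with a single reflection class and are not touched by Theorem~\ref{th:maintheo}; these are exactly the groups for which only the weak version is known, which is why the corollary is stated in its weak form in the first place.

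A secondary point: even as a proof of Theorem~\ref{th:maintheo} your sketch contains an unnecessary worry. You say one must certify that the $b_i$ ``remain free over $H_0$, so that the count $r\cdot|W_0|$ does not secretly collapse below $|W|$.'' No such certification is needed: Proposition~\ref{propHeckeBMR} only asks that $H$ be \emph{spanned} by $|W|$ elements over $\Z[q,q^{-1}]$; freeness (Proposition~\ref{propFreeH0}) is then an automatic consequence, not a hypothesis to be verified during the computation. Also, the induction-on-rank setup you propose for $H_0$ is more elaborate than what the paper does for the groups of Theorem~\ref{th:maintheo}: there $W_0$ is always chosen to be a Coxeter parabolic (Lemma~\ref{lem;parab}), so $H_0$ is handled by classical theory rather than by recursion --- except for $G_{34}$, where a two-layer strategy through $G_{33}$ is indeed used.
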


In order to prove the theorem, we need a presentation of the braid groups. For groups of rank 2,
presentations were first obtained by Bannai, in \cite{BANNAI}. For groups of higher rank,
using the Zariski-Van Kampen method for computing presentations of fundamental groups, a conjectural presentation of $B$ 
 was found by empirical means by Bessis and Michel in \cite{BESSISMICHEL}. The proof that these presentations
were correct did depend on the verification of a geometric criterion. This justification
was subsequently provided in \cite{BESSIS}. Moreover, one finds in \cite{BESSIS} another way to justify these
presentations in the case of well-generated groups, that is, when the minimal number of reflections needed to generate $W$ is equal to the rank of $W$
-- this is the case for all the 2-reflection groups of higher rank except $G_{31}$.
Note however that, because of Proposition~\ref{propHeckeBMR}, we do not really need a \emph{presentation} of $B$, but
only to know that the chosen generators are braided reflections, and that the relations we use are valid -- but we do not really need to
check that they are sufficient to define the group.

From such a presentation, we can describe $H$ as the $\Z[q,q^{-1}]$-algebra defined by the same generators $s_i$ submitted to the defining
relations of the group together with
the additional relations $s_i^2 = (q-1)s_i + q$.
Indeed, it can be shown (see \cite{BMR}) that all the braided reflections
are conjugated to one another as soon as $W$ admits a single reflection class;
therefore, every relation $s^2 = (q-1)s + q$ for $s$ a braided reflection
is implied by the single relation $s_1^2 = (q-1)s_1 + q$.

In order to prove the theorem, we use the following lemma, for which we do not know any proof that does not rely on the
classification.

\begin{Lemma} \label{lem;parab} Every  irreducible complex 2-reflection group $W$ has a maximal parabolic subgroup which is a
Coxeter group.
\end{Lemma}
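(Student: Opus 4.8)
The plan is to verify Lemma~\ref{lem;parab} by a case-by-case inspection of the primitive irreducible complex $2$-reflection groups, exploiting the classification of Shephard and Todd. The key structural fact I would use is that the maximal parabolic subgroups of a complex reflection group $W$ are, up to conjugacy, the pointwise stabilizers of the lines in the reflection arrangement; equivalently they are obtained by deleting a single node from a well-chosen generating set of reflections, and they are themselves complex reflection groups of rank $\rk(W)-1$. So the strategy reduces to: for each irreducible $2$-reflection group $W$, list its maximal parabolic subgroups (up to conjugacy), and exhibit at least one among them that is a (real) Coxeter group.

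First I would dispose of the rank-$2$ groups $G_{12}$, $G_{13}$, $G_{22}$, and the dihedral cases, where the maximal parabolics have rank $1$ and are therefore cyclic groups generated by a single reflection of order $2$; since a cyclic group of order $2$ is the Coxeter group of type $A_1$, the statement is immediate in rank $2$. The substantive work is in higher rank, where I would consult the tables of parabolic subgroups (for instance those recording the reflection subquotients in the Shephard--Todd hierarchy, as encoded in the inclusions $G_{23}=H_3$, $G_{24}$, $G_{25}$, $G_{26}$, $G_{27}$, $G_{28}=F_4$, $G_{29}$, $G_{30}=H_4$, $G_{31}$, $G_{32}$, $G_{33}$, $G_{34}$, $G_{35}=E_6$, $G_{36}=E_7$, $G_{37}=E_8$). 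For the $2$-reflection groups of rank $\ge 3$ appearing in Theorem~\ref{th:maintheo}, namely $G_{24}$, $G_{27}$, $G_{29}$, $G_{31}$, $G_{33}$, $G_{34}$, I would identify a maximal parabolic of Coxeter type in each: for example a parabolic of type $A_{n}$ or $B_n$ obtained by restricting to a suitable sub-arrangement. Concretely, $G_{24}$ (rank $3$) contains a maximal parabolic of type $B_3$ or $A_3$; $G_{29}$, $G_{31}$ (rank $4$) contain parabolics of type $A_4$, $B_4$, or $D_4$; and the higher-rank $G_{33}$, $G_{34}$ contain parabolics that are real reflection groups ($D$- or $E$-type), which one reads off from the coset graph data such as Figure~\ref{figcosetgraphG24}.

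The main obstacle, and the reason the lemma is stated as resting on the classification, is that there is no uniform conceptual argument forcing a complex reflection group to possess a \emph{real} maximal parabolic: a maximal parabolic is again a complex reflection group, but \emph{a priori} it could itself be genuinely complex (non-real) for every choice of line to stabilize. The content of the lemma is the empirical observation, extracted from the classification, that this degeneration to a Coxeter group never fails for the $2$-reflection groups. Thus the proof cannot avoid examining each group: for each $W$ I must check that at least one line in the arrangement has a pointwise stabilizer that is a real reflection group, and verify this by matching the stabilizer's order, rank, and reflection data against the finite list of irreducible Coxeter groups. The genuine effort therefore lies in correctly computing these parabolic subgroups and recognizing the Coxeter ones among them; once the correct generating reflections are singled out, the verification that the resulting group is a Coxeter group reduces to checking the braid and order relations, which is routine.
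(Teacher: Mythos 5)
Your overall strategy --- reduce to the Shephard--Todd classification, note that maximal parabolics are rank-$(\rk W - 1)$ reflection subgroups (stabilizers of lines), and exhibit a Coxeter one for each group --- is exactly the paper's, and your treatment of the rank-$2$ cases (maximal parabolic $= A_1$) is correct. But the concrete data you propose for the higher-rank groups is wrong, and in a way that contradicts your own (correct) observation that a proper maximal parabolic has rank one less than $W$. You offer $B_3$ or $A_3$ as maximal parabolics of the rank-$3$ group $G_{24}$, and $A_4$, $B_4$ or $D_4$ for the rank-$4$ groups $G_{29}$ and $G_{31}$: these all have full rank, so none of them can be a \emph{proper} parabolic subgroup (and since $G_{24}$, $G_{29}$, $G_{31}$ are not themselves Coxeter groups, they cannot equal such a parabolic either). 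Likewise ``$E$-type'' cannot occur as a rank-$5$ parabolic of $G_{34}$. The correct identifications, which are the entire content of the lemma, are: $B_2$ for $G_{24}$ and $G_{27}$, then $B_3$, $A_3$, $A_4$, $D_5$ for $G_{29}$, $G_{31}$, $G_{33}$, $G_{34}$ respectively. Since the lemma \emph{is} this list, getting it wrong is not a presentational slip but a failure of the proof.

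Two further points. First, the lemma is stated for \emph{every} irreducible complex $2$-reflection group, not only the primitive ones; you silently drop the imprimitive series $G(de,e,n)$, which the paper handles by observing that the permutation subgroup $G(1,1,n)$, of Coxeter type $A_{n-1}$, is a maximal parabolic (except when $W$ is already $G(1,1,n)$). Second, your heuristic that maximal parabolics are ``obtained by deleting a single node from a well-chosen generating set'' is unreliable for complex reflection groups: the paper itself notes that the $D_4$ parabolic of $G_{33}$ is invisible in the standard presentation, and the coset graphs are constructed \emph{after} a parabolic has been chosen, so they cannot be used to find one.
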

\begin{proof} If $W$ belongs to the infinite series of complex reflection groups, of type $G(de,e,n)$ in Shephard and Todd notation,
the subgroup $G(1,1,n)$ of permutation matrices, which is a Coxeter group of type $A_{n-1}$, is a maximal parabolic subgroup,
except when $G(de,e,n) = G(1,1,n)$ is itself a Coxeter group. If $W$ is an exceptional group of 2-reflections of rank 2, the subgroup
generated by either of its reflection is a maximal parabolic subgroup of Coxeter type $A_1$. In higher rank the groups $G_{24}$
and $G_{27}$ admit a maximal parabolic subgroup of Coxeter type $B_2$, and the groups $G_{29}$, $G_{31}$, $G_{33}$ and $G_{34}$
admit maximal parabolic subgroups of Coxeter types $B_3$, $A_3$, $A_4$, $D_5$ respectively.
\end{proof}

We then prove the theorem as follows. We know by \cite{BMR} that to any such maximal parabolic subgroup $W_0$ is attached a (non-canonical)
embedding $B_0 \to B$ of the braid groups of $W_0$ inside $B$. Among the presentations of \cite{BESSISMICHEL}, we choose
one for which such an embedding corresponds to the choice of a proper subset $I$ of the set of indices involved in the
presentation of $B$. That is, we can identify $B_0$ with the subgroup of $B$ generated by the corresponding generators,
and defining relations of $B_0$ are given by the set of all the relations of the given presentation of $B$ which do not involve any
generator of $B_0$. In rank at least 3, the relations of Coxeter type in these presentations can be depicted inside a Coxeter-like diagram,
see Figure~\ref{fig:diagrams}. Of course, there are additional relations involving 3 generators, that we will describe in due time
(for $G_{31}$, these are represented by a circle in the diagram).

This group homomorphism $B_0 \to B$ induces an algebra morphism $H_0 \to H$, where $H_0$ denotes the (usual) Hecke algebra of $W_0$. Although we
do not  know {\it a priori} that it is injective, it nevertheless endows $H$ with the structure of a $H_0$-module.

We prove the following, for $W$ an irreducible complex 2-reflection group with a single reflection
class but the largest one $G_{34}$, and $W_0$ the parabolic subgroup provided by Lemma~\ref{lem;parab}. 

\begin{prop} \label{propGeneH0} As a $H_0$-module, $H$ is generated by $|W/W_0|$ elements.
\end{prop}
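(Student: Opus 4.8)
The plan is to realize $H$ as a left $H_0$-module spanned by a set $U = \{u_1, \dots, u_r\}$ of $r = |W/W_0|$ elements lifting a transversal of $W_0\backslash W$, and to prove that this spanning set exhausts $H$ by showing that the left $H_0$-submodule $R = \sum_{i} H_0\, u_i$ is stable under right multiplication by the generators $s_j$ of $H$. Since $R$ contains $1$ (we take $u_1 = 1$) and the $s_j$ generate $H$ as an algebra, such stability yields $H = 1\cdot H \subseteq R\cdot H \subseteq R$, whence $R = H$ and the proposition follows.

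First I would fix a set-theoretic transversal of the right cosets $W_0 w$ and encode the right action of $W$ on $W_0\backslash W$ by the coset graph of Figure~\ref{figcosetgraphG24}: its $r$ vertices are the cosets and, for each generator $s_j$ (one color per generator), an edge from vertex $i$ records the coset $W_0 w_i s_j$. I would then choose the representatives compatibly with a spanning tree of this graph rooted at the trivial coset, lifting each generator to a braided reflection so that every $u_i$ is a positive word $s_{j_1}\cdots s_{j_k}$ read off along the unique tree path from the root; in particular $u_{i'} = u_i s_j$ whenever the $j$-edge from $i$ is a tree edge leading away from the root.

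The core of the argument is the finite verification that $u_i s_j \in R$ for every vertex $i$ and every color $j$, organized case by case along the coset graph. When the $j$-edge at $i$ is a forward tree edge, one has $u_i s_j = u_{i'} \in R$ by construction; when it is the reverse of a tree edge, say $u_i = u_{i'} s_j$, the quadratic relation gives $u_i s_j = u_{i'} s_j^2 = (q-1)u_i + q\,u_{i'} \in R$; and when $s_j$ stabilizes the coset $W_0 w_i$, one must rewrite $u_i s_j$ as $h\,u_i$ for a suitable $h \in H_0$. The remaining, genuinely delicate cases are the non-tree edges of the coset graph: here I would use the Coxeter-type braid relations of the chosen presentation (and, for $G_{31}$, the extra ternary relations indicated by the circle in Figure~\ref{fig:diagrams}) to transport the trailing generator across the word $u_i$ and express $u_i s_j$ as an $H_0$-linear combination of the $u_{i'}$.

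The main obstacle will be precisely these non-tree and coset-stabilizing cases, where a naive rewriting produces words leaving $U$; controlling them forces one to invoke the specific braid relations of each group, and it is here that the passage from the well-generated groups to $G_{31}$ is most demanding. Because this verification is a closed finite computation depending only on the presentation and the explicit transversal encoded by the coset graph, it can be carried out group by group, and completing it for each such $W$ other than $G_{34}$ establishes that $R = H$, so that $H$ is generated by $r = |W/W_0|$ elements as an $H_0$-module.
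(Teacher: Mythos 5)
Your proposal is correct and follows essentially the same route as the paper: choose minimal-length coset representatives via a spanning tree of the coset graph, show the $H_0$-submodule they span is a right ideal containing $1$, handle reversed tree edges with the quadratic relation (the paper's Lemma~\ref{la:l.s}), and reduce the remaining entries to identities in the braid group verified case by case. The only minor imprecision is that in the coset-stabilizing and non-tree cases the Hecke-algebra identity $x_l.s=\sum_k\alpha_{l,k}(s)x_k$ generally involves correction terms supported on several cosets (as in the paper's Table~\ref{tableG24}), not a single term $h\,u_i$.
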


By the classical theory of Iwahori-Hecke algebras we know that $H_0$ is generated as a $\Z[q,q^{-1}]$-module by $|W_0|$ elements ;
therefore Proposition~\ref{propGeneH0} implies that $H$ is generated as a $\Z[q,q^{-1}]$-module
by $|W| = |W_0|.|W/W_0|$ elements and Proposition~\ref{propHeckeBMR} finally implies the theorem for all groups but $G_{34}$.

Once it is proved, the theorem implies that the map $H_0 \to H$ is indeed injective. Actually, Propositions~\ref{propGeneH0} and~\ref{propHeckeBMR} together imply a statement a bit stronger than the theorem (for all groups but $G_{34}$), namely:
\begin{prop} \label{propFreeH0} As a $H_0$-module, $H$ is a free $H_0$-module of rank $|W/W_0|$.
\end{prop}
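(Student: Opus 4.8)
The plan is to combine Proposition~\ref{propGeneH0} with the standard freeness of the Iwahori-Hecke algebra $H_0$ to upgrade the spanning statement into a genuine freeness statement. The key observation is that we already know, once the theorem is proved, that $H$ is a free $\Z[q,q^{-1}]$-module of rank $|W| = |W_0| \cdot |W/W_0|$, and that $H_0$ is itself a free $\Z[q,q^{-1}]$-module of rank $|W_0|$. The content of Proposition~\ref{propFreeH0} is to interpolate between these two facts: it asserts not merely that $H$ has the right rank as a $\Z[q,q^{-1}]$-module, but that it is free as a module over the (larger) ring $H_0$, with the expected rank $|W/W_0|$. The natural strategy is a rank-counting argument that forces the generating set of Proposition~\ref{propGeneH0} to be a basis.

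First I would fix the generating set of size $|W/W_0|$ produced by Proposition~\ref{propGeneH0}, say $m_1, \dots, m_d$ with $d = |W/W_0|$, so that $H = \sum_{j=1}^{d} H_0 \cdot m_j$. Since $H_0$ is spanned over $\Z[q,q^{-1}]$ by $|W_0|$ elements, the products $h_i m_j$ (with the $h_i$ the $\Z[q,q^{-1}]$-spanning set of $H_0$) span $H$ over $\Z[q,q^{-1}]$, and their number is exactly $|W_0| \cdot d = |W|$. By Proposition~\ref{propHeckeBMR} and the theorem, $H$ is free of rank $|W|$ over $\Z[q,q^{-1}]$; a spanning set of a free module of rank $|W|$ consisting of exactly $|W|$ elements must be a basis. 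Hence the $h_i m_j$ form a $\Z[q,q^{-1}]$-basis of $H$.

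The second step is to deduce from this that the $m_j$ are $H_0$-linearly independent and that $H_0$ itself injects. Suppose $\sum_j a_j m_j = 0$ with $a_j \in H_0$. Writing each $a_j$ in terms of the $\Z[q,q^{-1}]$-basis $h_i$ of $H_0$ and expanding, the relation becomes a $\Z[q,q^{-1}]$-linear relation among the $h_i m_j$; since these are linearly independent over $\Z[q,q^{-1}]$, all coefficients vanish, which forces every $a_j = 0$ because the $h_i$ form a basis of $H_0$. This simultaneously shows that the $H_0$-module map $H_0^{\,d} \to H$ sending the $j$-th standard generator to $m_j$ is injective, and it is surjective by Proposition~\ref{propGeneH0}; therefore it is an isomorphism of $H_0$-modules, which is exactly the assertion that $H$ is free over $H_0$ of rank $d = |W/W_0|$.

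The only genuine subtlety, rather than a true obstacle, is ensuring that $H_0$ is the \emph{usual} Hecke algebra of $W_0$ with its known freeness, so that the composite $H_0 \to H$ does not collapse $H_0$ before we count: this is precisely why the argument is run through $\Z[q,q^{-1}]$ rather than over $H_0$ directly. The linear-independence of the $h_i m_j$ over $\Z[q,q^{-1}]$ is forced by the rank count, and it is this count that rigidifies everything and rules out any degeneracy in the module structure induced by the noncanonical map $B_0 \to B$; once the theorem supplies the exact rank $|W|$, freeness over $H_0$ is automatic.
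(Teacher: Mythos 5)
Your argument is correct and is exactly the rank-counting argument the paper has in mind: the authors simply assert that Propositions~\ref{propGeneH0} and~\ref{propHeckeBMR} together yield the freeness over $H_0$, and your write-up supplies the intended details (the $|W_0|\cdot|W/W_0| = |W|$ products $h_i m_j$ span a free $\Z[q,q^{-1}]$-module of rank $|W|$, hence form a basis, forcing the $m_j$ to be an $H_0$-basis). No gaps; the key fact that a spanning set of cardinality equal to the rank of a free module over a commutative ring is automatically a basis is standard and correctly invoked.
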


We now explain how we prove Proposition~\ref{propGeneH0}.
We denote by $S$ the set of generators $s_i$ of~$W$.
In each case, we choose a system of representatives of $W/W_0$, and more specifically a set
$x_l, l \in \{1, \dots, |W/W_0| \}$, of words in the $s_i$ of minimal length whose images in $W$ represent all the classes of $W/W_0$.
We show that the $H_0$-submodule $\sum_l H_0 x_l$ is a right ideal in $H$. Since it contains the identity of $H$ this will prove our Proposition~\ref{propGeneH0}.
For this we need to establish $|W/W_0|\,|S|$ relations of the form $x_l . s = \sum_{1 \leq k \leq |W:W_0|} \alpha_{l,k}(s) x_k$
with $\alpha_{l,k}(s) \in H_0$. This is basically what we do.

\begin{figure}
\begin{tikzpicture}[dba/.style={double,double equal sign distance}]
\draw (0,0) node (10) {$2$};
\draw (10)++(1,0) node (11) {$3$};
\draw (10)++(.5,1) node (12) {$1$};
\draw (10) edge[dba] (11);
\draw (10)--(12);
\draw (11)--(12);
\draw (11)++(1,0) node (6) {$1$};
\draw (6)++(1,0) node (7) {$2$};
\draw (7)++(1,0) node (8) {$3$};
\draw (7)++(.5,1) node (9) {$4$};
\draw[u] (6)--(7);
\draw[u] (7)--(9);
\draw[u] (8)--(9);
\draw[u] (7) edge[dba]  (8);
\draw (8)++(1,1) node (19) {$4$};
\draw (8)++(2,0) node (20) {$1$};
\draw (20)++(1,0) node (21) {$3$};
\draw (19)++(1.5,0) node (22) {$2$};
\draw (22)++(1.5,0) node (23) {$5$};
\draw[u]  (19)--(22);
\draw[u]  (19)--(20);
\draw[u]  (22)--(23);
\draw[u]  (21)--(23);
\draw (20)++(.5,.35) circle (12pt);
\draw (8)++(5,0) node (13) {$1$}; 
\draw (13)++(1,0) node (14) {$2$};
\draw (14)++(1,0) node (15) {$4$};
\draw (15)++(1,0) node (16) {$5$};
\draw (14)++(.5,1) node (17) {$3$};
\draw[u] (13)--(14);
\draw[u] (14)--(15);
\draw[u] (15)--(16);
\draw[u] (14)--(17);
\draw[u] (15)--(17);
\begin{pgfonlayer}{background}
\node [fill=black!5,fit=(current bounding box.north west) (current bounding box.south east)] {};
\end{pgfonlayer}
\end{tikzpicture}
\caption{Diagrammatic presentations for the Coxeter relations of the groups $G_{24}/G_{27}$, $G_{29}$, $G_{31}$, $G_{33}$}
\label{fig:diagrams}
\end{figure}
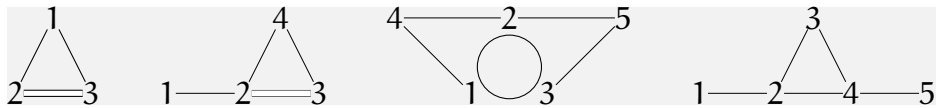

In Section~\ref{sec:G24} we will prove the conjecture for the group $G_{24}$ following this procedure `by hand'
by establishing a number of equations
of the form $m.s = \dots$ for $m$ some word in the generators. This involves a well-defined
ordering in the building of coset representatives, plus a well-defined ordering of the entries that we fill in, so that the
computation of each entry does not involve entries that are not yet filled in. A visual support is given by the
`coset graph' for $W/W_0$, namely the graph whose vertices are the (images in $W_0$ of the) $x_l$,
and an edge $x_l \to^s x_n$ means that $x_n$ is defined as $x_l.s$. The graph for $G_{24}$ is given by Figure~\ref{figcosetgraphG24},
the three different colors for the edges corresponding to the 3 generators of the group.
The graphs for $G_{12}$ and $G_{29}$ are similarly depicted in Figures~\ref{figcosetgraphG12} and~\ref{figcosetgraphG29}.

\begin{figure}[ht]
\begin{center}
\begin{tikzpicture}
\draw (0,0) node (1) {$_{1}$}; 
\draw (1) ++ (1,-0.5) node (2) {$_2$};  
\draw (2) ++ (1.5,1) node (3) {$_3$};  
\draw (1) ++ (1,0.5) node (4) {$_4$};  
\draw (4) ++ (1.5,-1) node (5) {$_5$};  
\draw (2) ++ (0.5,-1) node (6) {$_6$};  
\draw (6) ++ (0,-1) node (7) {$_7$};  
\draw (3) ++ (2,-1) node (8) {$_8$};  
\draw (8) ++ (1.5,1) node (9) {$_9$};  
\draw (3) ++ (0.5,1.5) node (10) {$_{10}$};  
\draw (10) ++ (1,0) node (11) {$_{11}$};  
\draw (4) ++ (0.5,1) node (12) {$_{12}$};  
\draw (12) ++ (0,1) node (13) {$_{13}$};  
\draw (5) ++ (0.5,-1.5) node (14) {$_{14}$};  
\draw (14) ++ (1,0) node (15) {$_{15}$};  
\draw (5) ++ (2,1) node (16) {$_{16}$};  
\draw (16) ++ (1.5,-1) node (17) {$_{17}$};  
\draw (6) ++ (2,0.5) node (18) {$_{18}$};  
\draw (18) ++ (0,2) node (19) {$_{19}$};  
\draw (9) ++ (1,-0.5) node (20) {$_{20}$};  
\draw (9) ++ (-0.5,1) node (21) {$_{21}$};  
\draw (21) ++ (0,1) node (22) {$_{22}$};  
\draw (15) ++ (1.5,-0.5) node (23) {$_{23}$};  
\draw (23) ++ (0,1) node (24) {$_{24}$};  

\draw[r] (2) -- (3);
\draw[r] (4) -- (5);
\draw[r] (6) -- (7);
\draw[r] (8) -- (9);
\draw[r] (10) -- (11);
\draw[r] (12) -- (13);
\draw[r] (14) -- (15);
\draw[r] (16) -- (17);
\draw[r] (18) -- (19);
\draw[r] (21) -- (22);
\draw[r] (23) -- (24);

\draw[g] (1) -- (2);
\draw[g] (3) -- (8);
\draw[g] (4) -- (12);
\draw[g] (5) -- (14);
\draw[g] (6) -- (18);
\draw[g] (9) -- (20);
\draw[g] (10) -- (13);
\draw[g] (11) -- (16);
\draw[g] (15) -- (23);
\draw[g] (17) -- (24);
\draw[g] (19) -- (21);

\draw[b] (1) -- (4);
\draw[b] (2) -- (6);
\draw[b] (3) -- (10);
\draw[b] (5) -- (16);
\draw[b] (7) -- (14);
\draw[b] (8) -- (15);
\draw[b] (9) -- (21);
\draw[b] (11) -- (22);
\draw[b] (12) -- (19);
\draw[b] (17) -- (20);
\draw[b] (18) -- (24);

\begin{pgfonlayer}{background}
\node [fill=black!5,fit=(current bounding box.north west) (current bounding box.south east)] {};
\end{pgfonlayer}
\end{tikzpicture}
\end{center}
\caption{The coset graph for $G_{12}$}
\label{figcosetgraphG12}
\end{figure}
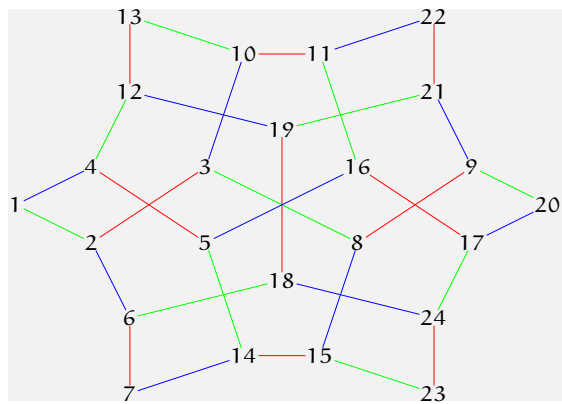

\begin{figure}\scriptsize
\begin{tikzpicture}[scale=0.8]
\draw (0,0) node (1) {$_{1}$};
\draw (1) ++ (1,0) node (2) {$_2$};
\draw (2) ++ (1,1.5) node (3) {$_3$};
\draw (2) ++ (1,-1.5) node (4) {$_4$};
\draw (2) ++ (2,3) node (5) {$_5$};
\draw (2) ++ (2,2) node (6) {$_6$};
\draw (2) ++ (2,-2) node (7) {$_7$};
\draw (2) ++ (3,3.5) node (8) {$_8$};
\draw (2) ++ (3,1.5) node (9) {$_9$};
\draw (6) ++ (-0.5,1) node (10) {$_{26}$};
\draw (2) ++ (3,-2.5) node (11) {$_{10}$};
\draw (2) ++ (3,-1.5) node (12) {$_{11}$};
\draw (7) ++ (0.5,-1.5) node (13) {$_{32}$};
\draw (2) ++ (4,3.5) node (14) {$_{12}$};
\draw (10) ++ (0.5,1) node (15) {$_{27}$};
\draw (2) ++ (4,1.5) node (16) {$_{13}$};
\draw (2) ++ (4,0) node (17) {$_{14}$};
\draw (2) ++ (4,-2) node (18) {$_{15}$};
\draw (13) ++ (1,-0.5) node (19) {$_{33}$};
\draw (2) ++ (5,3) node (20) {$_{16}$};
\draw (2) ++ (5,4) node (21) {$_{17}$};
\draw (14) ++ (1,0) node (22) {$_{38}$};
\draw (10) ++ (1,2) node (23) {$_{28}$};
\draw (2) ++ (5,0) node (24) {$_{18}$};
\draw (2) ++ (4,-1) node (25) {$_{50}$};
\draw (2) ++ (5,-2) node (26) {$_{19}$};
\draw (13) ++ (2,-1) node (27) {$_{34}$};
\draw (2) ++ (6,3.5) node (28) {$_{20}$};
\draw (20) ++ (1.5,-0.5) node (29) {$_{39}$};
\draw (2) ++ (5.5,5.5) node (30) {$_{56}$};
\draw (23) ++ (1,-1) node (31) {$_{40}$};

\draw (30) ++ (0.5,-1) node (43) {$_{59}$};
\draw (43) ++ (0.5,-1.5) node (32) {$_{41}$};

\draw (10) ++ (1.5,3) node (33) {$_{29}$};
\draw (2) ++ (6,-0.5) node (34) {$_{21}$};
\draw (25) ++ (2,1) node (35) {$_{51}$};
\draw (2) ++ (6,-1.5) node (36) {$_{22}$};
\draw (2) ++ (6.5,-2.5) node (37) {$_{80}$};
\draw (13) ++ (3,-1.5) node (38) {$_{35}$};
\draw (2) ++ (7,3) node (39) {$_{23}$};
\draw (30) ++ (1.5,-1) node (40) {$_{57}$};
\draw (30) ++ (2,-2) node (56) {$_{61}$};
\draw (56) ++ (0.5,-1.5) node (41) {$_{42}$};
\draw (30) ++ (0,1) node (42) {$_{58}$};
\draw (41) ++ (0,0.5) node (44) {$_{43}$};
\draw (10) ++ (2,4) node (45) {$_{30}$};
\draw (2) ++ (7,0) node (46) {$_{24}$};
\draw (37) ++ (0.5,1) node (47) {$_{81}$};
\draw (25) ++ (4,2) node (48) {$_{52}$};
\draw (36) ++ (1.5,0.5) node (49) {$_{104}$};
\draw (37) ++ (0.5,-1) node (50) {$_{82}$};
\draw (13) ++ (4,-2) node (51) {$_{36}$};
\draw (38) ++ (1,1) node (52) {$_{111}$};
\draw (2) ++ (8,1.5) node (53) {$_{25}$};

\draw (30) ++ (1,1) node (91) {$_{119}$};
\draw (91) ++ (1,-2) node (71) {$_{117}$};
\draw (71) ++ (1,-2) node (54) {$_{116}$};
\draw (54) ++ (1,-2) node (72) {$_{118}$};
\draw (72) ++ (1,-2) node (92) {$_{120}$};
\draw (92) ++ (1,-2) node (111) {$_{121}$};

\draw (40) ++ (0.5,1) node (55) {$_{60}$};

\draw (37) ++ (4,2) node (78) {$_{90}$};
\draw (78) ++ (-0.5,1.5) node (57) {$_{44}$};

\draw (42) ++ (0.5,1) node (58) {$_{62}$};
\draw (42) ++ (-0.5,1) node (59) {$_{63}$};
\draw (57) ++ (0,-0.5) node (60) {$_{45}$};
\draw (10) ++ (2.5,5) node (61) {$_{31}$};
\draw (46) ++ (1.5,0.5) node (62) {$_{105}$};
\draw (47) ++ (0.5,1) node (63) {$_{83}$};
\draw (47) ++ (0.5,-1) node (64) {$_{84}$};
\draw (25) ++ (6,3) node (65) {$_{53}$};
\draw (62) ++ (-0.5,0) node (66) {$_{106}$};
\draw (50) ++ (0,1) node (67) {$_{107}$};
\draw (50) ++ (0,-1) node (68) {$_{85}$};
\draw (13) ++ (5,-2.5) node (69) {$_{37}$};
\draw (53) ++ (2,0) node (70) {$_{122}$};
\draw (58) ++ (1.5,-1) node (73) {$_{64}$};
\draw (55) ++ (1.5,1) node (74) {$_{65}$};
\draw (56) ++ (1.5,-1) node (75) {$_{66}$};

\draw (78) ++ (1.5,-1) node (96) {$_{94}$};
\draw (96) ++ (-0.5,1.5) node (76) {$_{46}$};

\draw (96) ++ (1,1.5) node (97) {$_{131}$};
\draw (97) ++ (-1.5,0.5) node (77) {$_{47}$};

\draw (59) ++ (0.5,1) node (79) {$_{67}$};

\draw (70) ++ (1,1.5) node (89) {$_{123}$};
\draw (89) ++ (-1.5,-0.5) node (80) {$_{108}$};

\draw (64) ++ (0.5,1.5) node (81) {$_{109}$};

\draw (63) ++ (1.5,1) node (82) {$_{86}$};
\draw (64) ++ (1.5,-1) node (83) {$_{87}$};
\draw (25) ++ (8,4) node (84) {$_{54}$};
\draw (75) ++ (1.5,1) node (85) {$_{70}$};
\draw (80) ++ (0.5,0) node (86) {$_{110}$};
\draw (68) ++ (1.5,-1) node (87) {$_{88}$};
\draw (68) ++ (0.5,-1) node (88) {$_{89}$};
\draw (70) ++ (1,-1.5) node (90) {$_{124}$};
\draw (73) ++ (1.5,1) node (93) {$_{68}$};
\draw (74) ++ (1.5,-1) node (94) {$_{69}$};

\draw (97) ++ (1,-0.5) node (115) {$_{135}$};
\draw (115) ++ (-1,0) node (95) {$_{48}$};

\draw (78) ++ (0.5,-1) node (98) {$_{95}$};
\draw (79) ++ (0.5,1) node (99) {$_{71}$};

\draw (89) ++ (1,1.5) node (108) {$_{125}$};
\draw (108) ++ (-1.5,-0.5) node (100) {$_{112}$};

\draw (94) ++ (-0.5,-1.5) node (101) {$_{113}$};
\draw (83) ++ (0,-1) node (102) {$_{91}$};
\draw (83) ++ (1.5,1) node (103) {$_{92}$};
\draw (25) ++ (10,5) node (104) {$_{55}$};

\draw (89) ++ (1,0.5) node (109) {$_{126}$};
\draw (109) ++ (1,-0.5) node (105) {$_{129}$};

\draw (85) ++ (0.5,1) node (106) {$_{74}$};
\draw (87) ++ (0.5,-1) node (107) {$_{93}$};
\draw (90) ++ (1,-0.5) node (110) {$_{127}$};
\draw (93) ++ (0.5,1) node (112) {$_{72}$};
\draw (94) ++ (0.5,1) node (113) {$_{73}$};

\draw (110) ++ (1,-0.5) node (125) {$_{130}$};
\draw (125) ++ (2.5,-1) node (129) {$_{146}$};
\draw (129) ++ (-1,1) node (114) {$_{49}$};
\draw (98) ++ (1.5,-1) node (116) {$_{98}$};
\draw (97) ++ (1,1.5) node (117) {$_{134}$};
\draw (99) ++ (1.5,1) node (118) {$_{75}$};
\draw (113) ++ (0,-1) node (119) {$_{114}$};
\draw (102) ++ (1.5,1) node (120) {$_{96}$};
\draw (105) ++ (1,0) node (121) {$_{133}$};
\draw (106) ++ (0.5,1) node (122) {$_{78}$};
\draw (107) ++ (1.5,-1) node (123) {$_{97}$};
\draw (108) ++ (1,0.5) node (124) {$_{128}$};
\draw (112) ++ (1.5,-1) node (126) {$_{76}$};
\draw (118) ++ (1.5,-1) node (127) {$_{77}$};
\draw (112) ++ (1,0.5) node (128) {$_{152}$};
\draw (115) ++ (1,0) node (130) {$_{139}$};
\draw (116) ++ (0,-1) node (131) {$_{101}$};
\draw (117) ++ (1,0) node (132) {$_{138}$};
\draw (118) ++ (0,1) node (133) {$_{158}$};

\draw (128) ++ (1,-0.5) node (141) {$_{153}$};
\draw (141) ++ (1,-0.5) node (147) {$_{154}$};
\draw (147) ++ (-1,-1) node (134) {$_{115}$};

\draw (120) ++ (1.5,-1) node (135) {$_{99}$};
\draw (121) ++ (1,1.5) node (136) {$_{136}$};
\draw (123) ++ (1.5,1) node (137) {$_{100}$};
\draw (123) ++ (0,-1) node (138) {$_{159}$};
\draw (124) ++ (1,0) node (139) {$_{132}$};
\draw (127) ++ (1.5,-1) node (140) {$_{79}$};
\draw (129) ++ (0.5,1) node (142) {$_{147}$};
\draw (129) ++ (-0.5,-1) node (143) {$_{148}$};
\draw (132) ++ (1,-0.5) node (144) {$_{141}$};
\draw (130) ++ (1,0.5) node (145) {$_{142}$};
\draw (131) ++ (0.5,-1) node (146) {$_{103}$};
\draw (135) ++ (0.5,-1) node (148) {$_{102}$};
\draw (136) ++ (1,0.5) node (149) {$_{140}$};
\draw (139) ++ (1,0.5) node (150) {$_{137}$};
\draw (142) ++ (0.5,1) node (151) {$_{149}$};
\draw (143) ++ (-0.5,-1) node (152) {$_{150}$};
\draw (144) ++ (1,0.5) node (153) {$_{144}$};
\draw (147) ++ (1,-0.5) node (154) {$_{155}$};
\draw (152) ++ (-0.5,-1) node (155) {$_{151}$};
\draw (149) ++ (1,-0.5) node (156) {$_{143}$};
\draw (154) ++ (1,-0.5) node (158) {$_{156}$};
\draw (158) ++ (1,-0.5) node (157) {$_{157}$};
\draw (156) ++ (1,-1.5) node (159) {$_{145}$};
\draw (159) ++ (1,0) node (160) {$_{160}$};

\draw[o] (3) -- (5);
\draw[o] (6) -- (8);
\draw[o] (7) -- (11);
\draw[o] (9) -- (16);
\draw[o] (10) -- (15);
\draw[o] (12) -- (18);
\draw[o] (13) -- (19);
\draw[o] (14) -- (20);
\draw[o] (17) -- (24);
\draw[o] (21) -- (28);
\draw[o] (22) -- (29);
\draw[o] (25) -- (35);
\draw[o] (26) -- (34);
\draw[o] (30) -- (40);
\draw[o] (32) -- (41);
\draw[o] (36) -- (46);
\draw[o] (37) -- (47);
\draw[o] (42) -- (58);
\draw[o] (43) -- (56);
\draw[o] (44) -- (60);
\draw[o] (45) -- (61);
\draw[o] (49) -- (62);
\draw[o] (50) -- (64);
\draw[o] (51) -- (69);
\draw[o] (55) -- (73);
\draw[o] (57) -- (76);
\draw[o] (59) -- (79);
\draw[o] (66) -- (86);
\draw[o] (67) -- (81);
\draw[o] (68) -- (87);
\draw[o] (71) -- (91);
\draw[o] (74) -- (93);
\draw[o] (77) -- (95);
\draw[o] (78) -- (96);
\draw[o] (80) -- (100);
\draw[o] (83) -- (102);
\draw[o] (84) -- (104);
\draw[o] (88) -- (107);
\draw[o] (89) -- (108);
\draw[o] (92) -- (111);
\draw[o] (94) -- (113);
\draw[o] (97) -- (115);
\draw[o] (98) -- (116);
\draw[o] (101) -- (119);
\draw[o] (103) -- (120);
\draw[o] (105) -- (121);
\draw[o] (106) -- (122);
\draw[o] (109) -- (124);
\draw[o] (110) -- (125);
\draw[o] (112) -- (126);
\draw[o] (117) -- (132);
\draw[o] (127) -- (140);
\draw[o] (128) -- (141);
\draw[o] (130) -- (144);
\draw[o] (131) -- (135);
\draw[o] (136) -- (139);
\draw[o] (142) -- (151);
\draw[o] (145) -- (153);
\draw[o] (146) -- (148);
\draw[o] (149) -- (150);
\draw[o] (152) -- (155);
\draw[o] (157) -- (158);

\draw[g] (2) -- (3);
\draw[g] (4) -- (7);
\draw[g] (6) -- (9);
\draw[g] (8) -- (14);
\draw[g] (12) -- (17);
\draw[g] (15) -- (23);
\draw[g] (16) -- (20);
\draw[g] (18) -- (26);
\draw[g] (19) -- (27);
\draw[g] (22) -- (31);
\draw[g] (24) -- (34);
\draw[g] (28) -- (39);
\draw[g] (30) -- (42);
\draw[g] (32) -- (44);
\draw[g] (33) -- (45);
\draw[g] (35) -- (48);
\draw[g] (38) -- (51);
\draw[g] (40) -- (55);
\draw[g] (41) -- (57);
\draw[g] (46) -- (53);
\draw[g] (47) -- (63);
\draw[g] (49) -- (66);
\draw[g] (50) -- (68);
\draw[g] (52) -- (67);
\draw[g] (54) -- (71);
\draw[g] (56) -- (75);
\draw[g] (58) -- (73);
\draw[g] (60) -- (76);
\draw[g] (62) -- (80);
\draw[g] (64) -- (83);
\draw[g] (65) -- (84);
\draw[g] (70) -- (89);
\draw[g] (72) -- (92);
\draw[g] (74) -- (94);
\draw[g] (78) -- (82);
\draw[g] (79) -- (99);
\draw[g] (85) -- (106);
\draw[g] (86) -- (100);
\draw[g] (87) -- (102);
\draw[g] (90) -- (110);
\draw[g] (93) -- (112);
\draw[g] (95) -- (114);
\draw[g] (97) -- (117);
\draw[g] (98) -- (103);
\draw[g] (105) -- (109);
\draw[g] (107) -- (123);
\draw[g] (113) -- (126);
\draw[g] (115) -- (130);
\draw[g] (116) -- (131);
\draw[g] (118) -- (127);
\draw[g] (119) -- (134);
\draw[g] (120) -- (135);
\draw[g] (121) -- (136);
\draw[g] (124) -- (139);
\draw[g] (129) -- (142);
\draw[g] (132) -- (144);
\draw[g] (137) -- (148);
\draw[g] (141) -- (147);
\draw[g] (143) -- (152);
\draw[g] (149) -- (156);
\draw[g] (153) -- (159);
\draw[g] (154) -- (158);

\draw[b] (2) -- (4);
\draw[b] (3) -- (6);
\draw[b] (5) -- (8);
\draw[b] (7) -- (12);
\draw[b] (9) -- (17);
\draw[b] (11) -- (18);
\draw[b] (14) -- (21);
\draw[b] (16) -- (24);
\draw[b] (20) -- (28);
\draw[b] (22) -- (32);
\draw[b] (23) -- (33);
\draw[b] (26) -- (36);
\draw[b] (27) -- (38);
\draw[b] (29) -- (41);
\draw[b] (30) -- (43);
\draw[b] (34) -- (46);
\draw[b] (37) -- (50);
\draw[b] (39) -- (53);
\draw[b] (40) -- (56);
\draw[b] (42) -- (59);
\draw[b] (47) -- (64);
\draw[b] (48) -- (65);
\draw[b] (49) -- (67);
\draw[b] (54) -- (72);
\draw[b] (55) -- (74);
\draw[b] (57) -- (77);
\draw[b] (58) -- (79);
\draw[b] (62) -- (81);
\draw[b] (63) -- (82);
\draw[b] (68) -- (88);
\draw[b] (70) -- (90);
\draw[b] (73) -- (93);
\draw[b] (75) -- (85);
\draw[b] (76) -- (95);
\draw[b] (78) -- (98);
\draw[b] (80) -- (101);
\draw[b] (83) -- (103);
\draw[b] (87) -- (107);
\draw[b] (89) -- (109);
\draw[b] (94) -- (106);
\draw[b] (96) -- (116);
\draw[b] (97) -- (110);
\draw[b] (99) -- (118);
\draw[b] (100) -- (119);
\draw[b] (102) -- (120);
\draw[b] (105) -- (117);
\draw[b] (108) -- (124);
\draw[b] (112) -- (127);
\draw[b] (113) -- (122);
\draw[b] (115) -- (125);
\draw[b] (121) -- (132);
\draw[b] (123) -- (137);
\draw[b] (126) -- (140);
\draw[b] (129) -- (143);
\draw[b] (130) -- (145);
\draw[b] (131) -- (146);
\draw[b] (135) -- (148);
\draw[b] (136) -- (149);
\draw[b] (139) -- (150);
\draw[b] (144) -- (153);
\draw[b] (147) -- (154);
\draw[b] (156) -- (159);

\draw[r] (1) -- (2);
\draw[r] (6) -- (10);
\draw[r] (7) -- (13);
\draw[r] (8) -- (15);
\draw[r] (11) -- (19);
\draw[r] (14) -- (22);
\draw[r] (17) -- (25);
\draw[r] (20) -- (29);
\draw[r] (21) -- (30);
\draw[r] (23) -- (31);
\draw[r] (24) -- (35);
\draw[r] (26) -- (37);
\draw[r] (28) -- (40);
\draw[r] (32) -- (43);
\draw[r] (34) -- (47);
\draw[r] (36) -- (49);
\draw[r] (38) -- (52);
\draw[r] (39) -- (54);
\draw[r] (41) -- (56);
\draw[r] (45) -- (59);
\draw[r] (46) -- (62);
\draw[r] (48) -- (63);
\draw[r] (50) -- (67);
\draw[r] (51) -- (68);
\draw[r] (53) -- (70);
\draw[r] (55) -- (71);
\draw[r] (57) -- (78);
\draw[r] (61) -- (79);
\draw[r] (64) -- (81);
\draw[r] (65) -- (85);
\draw[r] (69) -- (87);
\draw[r] (72) -- (90);
\draw[r] (73) -- (91);
\draw[r] (75) -- (82);
\draw[r] (76) -- (96);
\draw[r] (77) -- (97);
\draw[r] (80) -- (89);
\draw[r] (84) -- (105);
\draw[r] (92) -- (103);
\draw[r] (94) -- (101);
\draw[r] (95) -- (115);
\draw[r] (98) -- (110);
\draw[r] (100) -- (108);
\draw[r] (104) -- (121);
\draw[r] (106) -- (109);
\draw[r] (111) -- (120);
\draw[r] (112) -- (128);
\draw[r] (113) -- (119);
\draw[r] (114) -- (129);
\draw[r] (116) -- (125);
\draw[r] (118) -- (133);
\draw[r] (122) -- (124);
\draw[r] (123) -- (138);
\draw[r] (126) -- (141);
\draw[r] (130) -- (142);
\draw[r] (134) -- (147);
\draw[r] (144) -- (151);
\draw[r] (146) -- (152);
\draw[r] (148) -- (155);
\draw[r] (149) -- (157);
\draw[r] (150) -- (158);
\draw[r] (159) -- (160);

\begin{pgfonlayer}{background}
\node [fill=black!5,fit=(current bounding box.north west) (current bounding box.south east),scale=0.8] {};
\end{pgfonlayer}
\end{tikzpicture}
\caption{The coset graph for $G_{29}/B_3$}
\label{figcosetgraphG29}
\end{figure}
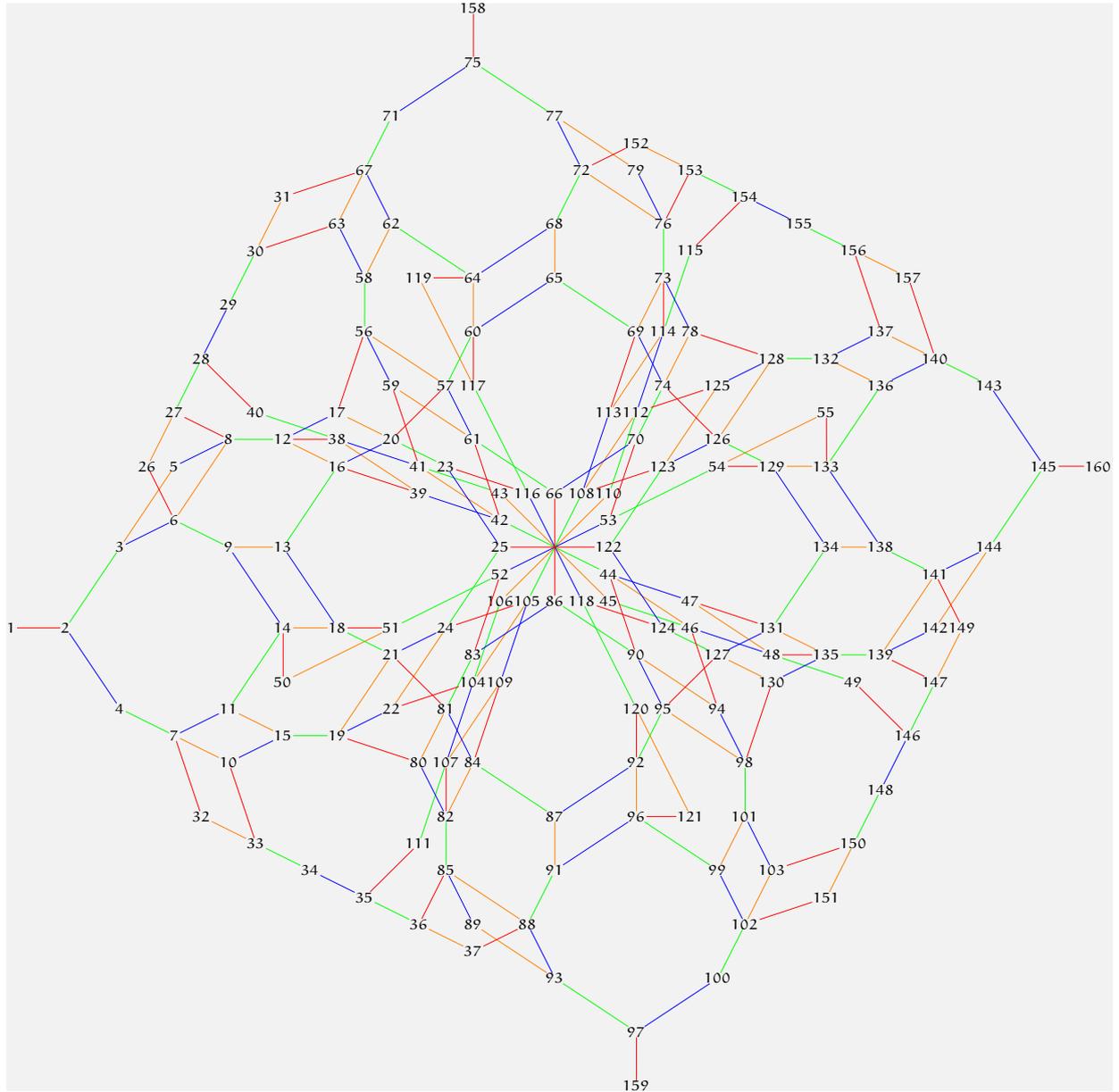

Then, in Section~\ref{sec:algo}, we will show that the procedure
can be automatized : we define algorithms which happen to converge in each case. These algorithms need to know in advance some additional
relations inside $B$, that we found heuristically. The search for such relations and their justification rely on the solution
of the word problem inside $B$. Fortunately, thanks to previous works, all these groups have decidable word problem, and there
are effective software to deal with them ; we explain all this, along with some basic algorithmic procedures, in Section~\ref{sec:genauto}.

Finally, we prove in Section~\ref{sec:proof} the conjecture for all groups, using
the algorithms described above. At the end of this section, we explain
the troubles we got into when dealing with the largest group $G_{34}$,
and the solutions we found to tackle this case, too.

\section{General automatic procedures}
\label{sec:genauto}

We now explain a few tools that we use in a systematic way and for which we will not
detail the calculations.

\subsection{Determining the coset graph}
\label{sec:coset-graph}
The coset graph of $W_0$ in $W$ with respect to $S$ is the graph which has the (right) cosets
$W_0 w$, $w \in W$, as its vertices, and edges $x \stackrel{s}{\mbox{-----}} y$
if $x.s = y$ for cosets $x, y$ and $s \in S$.

The coset graph, together with a distinguished spanning tree, is
determined by a standard orbit algorithm which works on an \emph{ordered} copy
$\hat{S}$ of the set $S$ of generators of $W$,
which induces a fixed order on all subsets $J$  of $S$.

Input: $W$, $\hat{S}$ and a subset $J$ of $S$ generating $W_0$. \\
Output: The coset graph $\Gamma = (V, E)$ of $W_0$ in $W$ with respect
to $S$ and a spanning tree $T \subseteq E$.

1. Initialize a empty queue $Q$, a vertex list $V$ and two edge lists
$E$ and $T$ as empty lists. Then push the
trivial coset $W_0 = \Span{J}$ onto  $Q$ and add it to $V$. \\
2. while $Q$ is not empty: \\
3. \qquad pop the next coset $x$ off $Q$ \\
4. \qquad for  $s \in \hat{S}$: process $(x, s)$. \\
5. return the graph $\Gamma = (V, E)$ and spanning edges $T$.\\

Processing $(x, s)$ is done as follows:

1.    Compute the coset $z:= x.s$ and
add the edge $x \stackrel{s}{\mbox{-----}} z$ to $E$ if not already present.\\
2.    If $z \notin V$: 
 push $z$ onto $Q$ and $V$, and add the edge
 $x \stackrel{s}{\mbox{\bf-----}} z$ to the spanning tree $T$.

Note that the spanning tree $T$ defines, for each coset
$x$, a word $w$ of minimal length in the generators $S$,
which represents the coset when evaluated in $W$.
This word depends on the ordering of $\hat{S}$.
The cosets are enumerated in the lexicographic order induced by
$\hat{S}$ on the set of words in $S$.

It is possible, to group the cosets  into double cosets of $W_0$ in $W$
and to ensure that the words representing  cosets in the same double coset
have a double coset representative as a common prefix.
For this, one  uses an additional queue $P$, which  like $Q$ initially
contains only the trivial coset  $W_0$, and modifies the processing of
$(x, s)$  so that  a new coset  $z =  x.s$ is also  pushed to the queue $P$, in
addition to $Q$.

The modified algorithm has the same input and output as the original.
The order $\hat{S}$ on $S$ induces an order $\hat{J}$ on
the subset $J$ and an order $\hat{K}$ on its complement $K = S \setminus J$.
The algorithm then proceeds as follows.

1a. Initialize two empty queues  $P$ and $Q$, a vertex list
$V$ and two edge lists $E$ and $T$ as empty lists.
 Then push the trivial coset $W_0$ onto $P$ and $Q$, and add it to $V$.\\
1b. while $P$ is not empty:\\
1c. \qquad pop a coset $y$ off $P$\\
1d. \qquad for $t$ in $\hat{K}$:\\
2a. \qquad \qquad while $Q$ is not empty:\\
3a.  \qquad \qquad \qquad pop a coset $x$ off $Q$\\
4a.  \qquad \qquad \qquad for  $s \in \hat{J}$: process $(x, s)$\\
5a.  \qquad \qquad process $(y, t)$\\
5b. return the graph $\Gamma = (V, E)$ and spanning edges $T$.

Note that this modified algorithm enumerates the cosets of $W_0$ in
$W$ in an order that is potentially different from the original lexicographic order, with
potentially different words for the coset representatives.

In the tables of results below we will indicate which version of the algorithm was used, to uniquely identify the words used for the coset representatives.

\subsection{Inversion of the relations}

The most elementary tool we will use in both cases is the following one.
\begin{Lemma}\label{la:l.s}
Assume that $\alpha \in H_0$ is invertible with inverse $\alpha'$,
and that $\beta \in H$.  Then, for each generator $s$ with inverse 
$s' = q^{-1} s + q^{-1} - 1$, and for any $l, n \in \{1, \dots, \Size{W/W_0}\}$, we have
\begin{align}\label{eq:l.s}
  x_l.s &= \alpha x_n - (q{-}1) \beta &\implies
x_n.s  &= q \alpha'  x_l  + (q{-}1) x_n 
+ q(q{-}1) \alpha' \beta.s', \\\label{eq:l.s'}
x_l.s &= \alpha x_n + (q{-}1) (x_l + \beta) &\implies
x_n.s &= q \alpha' x_l - (q{-}1) \alpha' \beta.s.
\end{align}
\end{Lemma}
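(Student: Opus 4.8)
The plan is to derive each conclusion from its hypothesis by right-multiplying through by $s$ and exploiting the quadratic relation $s^2 = (q-1)s + q$, which holds for every generator in $H$. The single algebraic input is that right-multiplication by $s$ sends $x_l.s$ to $x_l.s^2 = (q-1)(x_l.s) + q\,x_l$, so the left-hand side of each hypothesis can be fed back into itself. The inverse $s' = q^{-1}s + q^{-1} - 1$ enters only through the identity $q s' = s - q + 1$, which I would record at the outset.

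Concretely, for~\eqref{eq:l.s} I would start from $x_l.s = \alpha x_n - (q-1)\beta$ and multiply on the right by $s$. The right-hand side becomes $\alpha(x_n.s) - (q-1)\beta.s$ by associativity, while the left-hand side is $(q-1)(x_l.s) + q\,x_l$; substituting the hypothesis for $x_l.s$ on the left and rearranging isolates $\alpha(x_n.s)$ as a combination of $x_l$, $x_n$, $\beta$ and $\beta.s$. Left-multiplying by $\alpha'$ and using $\alpha'\alpha = 1$ then yields $x_n.s$. The only rewriting needed afterwards is to recognise the coefficient of $\alpha'\beta$: the two terms combine into $(q-1)\alpha'\beta\,(s - q + 1)$, and since $q s' = s - q + 1$ this equals $q(q-1)\alpha'\beta.s'$, matching the stated formula. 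For~\eqref{eq:l.s'} the same manoeuvre is cleaner still: after right-multiplying $x_l.s = \alpha x_n + (q-1)(x_l + \beta)$ by $s$, the two occurrences of $(q-1)(x_l.s)$ cancel, leaving $q\,x_l = \alpha(x_n.s) + (q-1)\beta.s$, whence $x_n.s = q\alpha' x_l - (q-1)\alpha'\beta.s$ directly.

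There is no genuine obstacle here; the computation is forced once the idea of multiplying through by $s$ is in place. The one point requiring care is the passage from $\alpha(x_n.s)$ to $x_n.s$: this is legitimate precisely because $\alpha$ and its inverse $\alpha'$ lie in the subalgebra $H_0$, so that $\alpha'\bigl(\alpha(x_n.s)\bigr) = (\alpha'\alpha)(x_n.s) = x_n.s$ by associativity, even though $x_n.s$ itself need not lie in $H_0$. One should also keep every multiplication on its correct side throughout, since $H$ is noncommutative and $\beta \in H$ is arbitrary; only the scalar factors $q$ and $q-1$ are central and may be moved freely.
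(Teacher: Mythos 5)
Your proof is correct and is essentially the computation the paper performs: the paper right-multiplies the hypothesis by $s'$ and then expands $s' = q^{-1}s + q^{-1} - 1$, whereas you right-multiply by $s$ and invoke $s^2 = (q-1)s + q$, which is the same manipulation in a different order, and both yield identical formulas after recombining $(q-1)\beta.s - (q-1)^2\beta$ into $q(q-1)\beta.s'$. Your remarks on left-multiplying by $\alpha'$ and on keeping left and right multiplications separate are exactly the points of care implicit in the paper's argument.
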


Hence, $x_n.s$ can be computed provided that $\beta.s$ is computable.

\begin{proof}
For the first equality, 
we have $x_l.s = \alpha x_n - (q{-}1) \beta$ hence
$\alpha x_n.s'  =   x_l  + (q{-}1) \beta.s'$ and,
expanding $s'$, we get
$x_n.(s - (q{-}1))  =  q \alpha'  (x_l  + (q{-}1) \beta.s' )$.
Therefore, $x_n.s  =  q \alpha'  (x_l  + (q{-}1) \beta.s' ) + (q{-}1) x_n$.
For the second equality we have $x_l.s  = \alpha  x_n  + (q{-}1) x_l  + (q{-}1) \beta$
hence $q x_l.s'  = \alpha  x_n  + (q{-}1) \beta$
and therefore $\alpha  x_n  = q x_l.s' - (q{-}1) \beta$
whence $ x_n.s  = q \alpha' x_l - (q{-}1) \alpha' \beta.s$.
\end{proof}

\subsection{Checking equalities inside the braid group}

The groups $B$ are known to have decidable word problems, and there are actually
efficient decision algorithms. In the case of well-generated reflection groups, Bessis
has shown in \cite{BESSIS}
that the groups $B$ are the groups of fractions of monoids $M$ which share with the
monoid of usual positive braids all the properties used
by Garside to solve the word problem for the usual braid group (such groups $B$ are called Garside groups).
Bessis actually introduced one monoid for each choice of a so-called Coxeter element $c$ in $W$,
and called it the dual braid monoid attached to $c$.
In terms of the generators that we introduce later on (see also the numbering of the diagrams inside Figure~\ref{fig:diagrams}) one can choose $c = s_1 s_2 s_3$ for $G_{24}$ and $G_{27}$, $c = s_1 s_2 s_4 s_3$ for $G_{29}$ and
$c = s_5 s_4 s_2 s_1 s_3$ for $G_{33}$. There
are tools in Michel's development version of the CHEVIE package for GAP3 (which is described in \cite{MIC}) in order to encode
that monoid and therefore to efficiently decide the equalities of two words inside $B$.
In case the groups are badly generated, we use the following properties. 
In the case of $G_{12}$ and $G_{22}$, they are groups of fractions of the monoids $f(4,3)$ and $f(5,3)$, where
$f(h,m)$ denotes the monoid presented by generators $x_1,\dots,x_m$ and
relations
$$
\underbrace{x_1 x_2 \dots x_m x_1 \dots}_{h \ \mathrm{terms} } = 
\underbrace{x_2x_3 \dots x_m x_1 \dots}_{h \ \mathrm{terms} } = \dots
$$
These monoids are also Garside monoids, investigated in M. Picantin's thesis (see \cite{PICTHESE}), and therefore
we can use the
same algorithm to get a normal form. In the case of $G_{31}$, it is possible to embed $B$ inside the Artin group
of type $E_8$, using the formulas of \cite{DMM} \S 3.

Let us now consider some entry $x_l.s$ that we want to compute. 
Let $\tilde{x}_l$ be the image of $x_l$ in $W$. 
There exist $w \in W_0$ and $n \in \{1,\dots,\Size{W/W_0}\}$
such that $\tilde{x}_l.s = w \tilde{x}_n$. Since $W_0$ is a Coxeter group, it is easy to find a shortest length word $m = s_{i_1}\dots s_{i_r}$
representing $w$ in $W$. Then, through the computations of normal forms we can
make $2^r$ tests in order to check
whether the equality $x_l.s = s_{i_1}^{\pm 1}\dots s_{i_r}^{\pm 1}. x_n$ holds inside $B$ for some choice of the signs $\pm 1$.
This is the way we used to find the additional relations used in the sequel.

\section{A sample case by hand : \texorpdfstring{$G_{24}$}{G24}}
\label{sec:G24}

The braid group of type $G_{24}$ admits the presentation
$$
\begin{array}{lcl}
B &=& \langle s_1,s_2,s_3 \ | \ s_1 s_2 s_1 = s_2 s_1 s_2, s_1 s_3 s_1 = s_3 s_1 s_3, s_2s_3s_3s_2 = s_3s_2s_3s_2,\\
& &  s_2 s_3 s_1 s_2 s_3 s_1 s_2 s_3 s_1 = s_3 s_2 s_3 s_1 s_2 s_3 s_1 s_2 s_3 \rangle 
\end{array} 
$$
and the first three relations can be symbolized by the diagram

$$
\begin{tikzpicture}[dba/.style={double,double equal sign distance}]
\draw (0,0) node (10) {$2$};
\draw (10)++(1,0) node (11) {$3$};
\draw (10)++(.5,1) node (12) {$1$};
\draw (10) edge[dba] (11);
\draw (10)--(12);
\draw (11)--(12);
\begin{pgfonlayer}{background}
\node [fill=black!5,fit=(current bounding box.north west) (current bounding box.south east)] {};
\end{pgfonlayer}
\end{tikzpicture}
$$

For short, we replace each generator by its numerical label, and
therefore the defining relations for $G_{24}$ become
$121 = 212,
  131 = 313,
  2323 = 3232$
  and
$  231231231 = 323123123$. In the sequel, we will denote $i'$
 the inverse $s_i^{-1}$ of the corresponding generator.

We check equalities inside $B$ using the dual braid monoid. On the example
of $G_{24}$ this can also be done by hand. In order to illustrate this, we prove a few
remarkable identities. For this we first need to describe this monoid.

The dual braid monoid associated to $c = 123$ is generated by 14 atoms $b_1,\dots,b_{14}$
defined by
$$
\begin{array}{lcllcllcllcl}
b_1 &=& 1 & b_2 &=& 2 &
b_3 &=& 3 & b_4 &=& 2'12 \\ b_5 &=& 3'13 & b_6 &=& 232' &
b_7 &=& 3'23 & b_8 &=& 3'b_43 \\ 
b_9 &=& 1232b_8 2'3'2'1' & b_{10} &=& 1b_6 1' &
b_{11} &=& 2'b_{10} 2 & b_{12} &=& 1 b_7 1' \\
b_{13} &=& 2 b_8 2' & b_{14} &=& b_8^{-1} b_2 b_8
\end{array}
$$

It is presented by the relations depicted by all the cycles
of figure \ref{fig:dualrelationsG24} as follows: a cycle $y_1\to y_2 \to \dots \to y_{n+1} = y_1$
represents the relations $y_1y_2 = y_2y_3 = y_3 y_4 = \dots = y_n y_1$. Conjugation by $c$
permutes the atoms as in figure \ref{fig:conjcG24}.

From this we get the relation $231231232= 123123123$. Indeed, $231231232= 23 c^2 2= c^2 (23)^{(c^2)} 2 = c^2 (b_2)^{(c^2)}(b_3)^{(c^2)} b_2 = c^2 b_6 b_{13} b_2$
by the conjugating action of $c$ (see figure \ref{fig:conjcG24}).  Now, by the defining relations of the dual braid monoid (see figure \ref{fig:dualrelationsG24}),
we have $b_6 \underline{b_{13} b_2} = \underline{b_6 b_2} b_8 = b_2 \underline{b_3 b_8} = \underline{b_2 b_4} b_3 = b_1 b_2 b_3 = c$. Notice
that this relation is valid inside $B$, but not inside the monoid with generators $s_1,s_2,s_3$ defined by the same presentation: this proves that this monoid does
not embed in $B$, as opposed to the dual braid monoid. This relation also has a nice group-theoretic interpretation: it says that there exists an involutive automorphism
of $B$ defined by $ 1 \mapsto 1'$,  $2\mapsto 3'$,  $3 \mapsto 2'$.

Another useful relation is $123'2313'23.1 = 2. 123'2313'23$. To prove it, we notice that $3'23 = b_7$ and therefore
$$
\begin{array}{lcl}
123'2313'231
&=& \underline{b_1 b_2} b_7 b_1 b_7 b_1
= b_2 b_4 b_7 \underline{b_1 b_7} b_1
= b_2 b_4 \underline{b_7 b_9} b_{12} b_1 \\
&=& b_2 \underline{b_4 b_1} b_7 b_{12} b_1
= b_2 b_1 b_2 b_7 \underline{b_{12} b_1}
= b_2 b_1b_2b_7b_1b_7 \\
&=& 2 123'2313'23
\end{array}
$$

\begin{figure}
\resizebox{8cm}{!}{
\mbox{$$
\xymatrix{
 & b_8 \ar[r] & b_{12} \ar[r] & b_2 \ar[dd]   \\
b_1 \ar[ur] & & & \\
 & b_{11}\ar[ul] & b_6 \ar[l] & \ar[l] b_5 
 }
\xymatrix{
 & b_9 \ar[r] & b_{13} \ar[r] & b_4 \ar[dd]   \\
b_3 \ar[ur] & & & \\
 & b_{10}\ar[ul] & b_{14} \ar[l] & \ar[l] b_7
 }
$$
}
}
\caption{Action of $x \mapsto c^{-1} x c$ on the set of atoms for type $G_{24}$.}
\label{fig:conjcG24}
\end{figure}
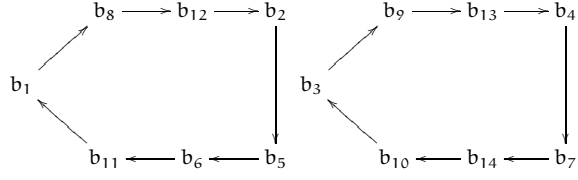
\begin{figure}
\begin{center}
\resizebox{15cm}{!}{
\mbox{
$$
\xymatrix{
b_1 \ar[r] & b_2 \ar[dl] & b_3 \ar[d]  & b_2 \ar[r] & b_{{11}} \ar[dl] & b_7 \ar[d] & b_6 \ar[r] & b_8\ar[dl]  & b_{{14}} \ar[d] & b_{{11}} \ar[r] & b_{{12}} \ar[dl]  & b_9 \ar[r] & b_{{11}} \ar[dl]\\
b_4 \ar[u] & b_1 \ar[ur] & b_5 \ar[l] & b_{{10}} \ar[u] & b_5\ar[ur]  & b_8 \ar[l]  & b_9 \ar[u]  & b_6 \ar[ur]  & b_{{12}}  \ar[l] & b_{{13}} \ar[u] & b_5 \ar[ur] & b_{{14}}\ar[l]\\
b_6 \ar[r] & b_{{13}} \ar[d]  & b_7 \ar[r] & b_9\ar[d]  & b_2\ar[r]  & b_3\ar[d]  & b_2\ar[r]  & b_8\ar[d]  & b_3\ar[r]  & b_8\ar[d]  & b_4\ar[r]  & b_5\ar[d]  \\
b_1\ar[u]  & b_{{10}} \ar[l]  & b_1 \ar[u]& b_{{12}}  \ar[l] & b_6 \ar[u]& b_7 \ar[l]  & b_{{13}}\ar[u] & b_{14} \ar[l]  & b_4\ar[u] & b_{11} \ar[l]  & b_{10}\ar[u] & b_{12} \ar[l]  
}
$$
}}
\end{center}
\caption{Defining relations of the dual braid monoid in type $G_{24}$.}
\label{fig:dualrelationsG24}
\end{figure}
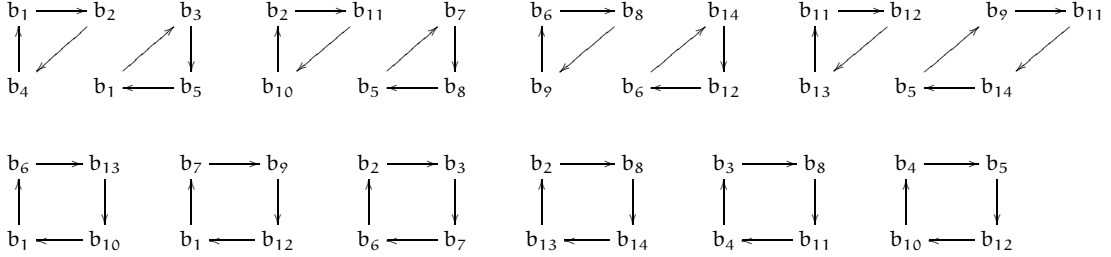

The main computations are gathered in Table~\ref{tableG24}. 
One first gets a list of representatives of the cosets
in the form of words in the generators, as described in the previous section.
Here we choose to group 
the cosets $W/W_0$
corresponding to the same double coset inside $W_0 \backslash W /W_0$,
by using the modified version of the algorithm on the
natural order $\hat{S} = (1,2,3)$.

The entries $x_1.2 = 2 \cdot x_1$ and $x_2.3 = 3 \cdot x_2$
arise from the fact that $W_0$ is generated by $s_2$ and $s_3$.

Entries corresponding to edges in the spanning tree are underlined,
e.g., the edge $x_1 \stackrel{1}{\mbox{-----}} x_2$ is represented by the
entries $\underline{x_{2}}$ for $x_1.1$ and $q \underline{x_{1}} + (q{-}1)x$
for $x_2.1$. (The name $x$ in the entry for $x_n.s$ always denotes $x_n$.)

\begin{table}
\begin{align*}
  \begin{array}{|l|ccc|}
\hline
    x & x.1& x.2 & x.3 \\ \hline
x_{1} = \emptyset & \underline{x_{2}} & 2 \cdot x & 3 \cdot x\\
\hline
x_{2} = 1 & q \underline{x_{1}} + (q{-}1)x & \underline{x_{3}} & \underline{x_{4}}\\
x_{3} = 12 & 2 \cdot x & q \underline{x_{2}} + (q{-}1)x & \underline{x_{5}}\\
x_{4} = 13 & 3 \cdot x & \underline{x_{6}} & q \underline{x_{2}} + (q{-}1)x\\
x_{5} = 123 & \underline{x_{10}} & \underline{x_{7}} & q \underline{x_{3}} + (q{-}1)x\\
x_{6} = 132 & \underline{x_{14}} & q \underline{x_{4}} + (q{-}1)x & \underline{x_{8}}\\
x_{7} = 1232 & \underline{x_{18}} & q \underline{x_{5}} + (q{-}1)x & \underline{x_{9}}\\
x_{8} = 1323 & \underline{x_{22}} & x_9 & q \underline{x_{6}} + (q{-}1)x\\
x_{9} = 12323 & \underline{x_{26}} & q x_8 + (q{-}1)x & q \underline{x_{7}} + (q{-}1)x\\
\hline
x_{10} = 1231 & q \underline{x_{5}} + (q{-}1)x & \underline{x_{11}} & 2 \cdot x \\
x_{11} = 12312 & x_{19} & q \underline{x_{10}} + (q{-}1)x & \underline{x_{12}}\\
x_{12} = 123123 & 232' \cdot x & \underline{x_{13}} & q \underline{x_{11}} + (q{-}1)x\\
x_{13} = 1231232 & \underline{x_{34}} & q \underline{x_{12}} + (q{-}1)x & 2 \cdot x\\
\hline
x_{14} = 1321 & q \underline{x_{6}} + (q{-}1)x & 3 \cdot x & \underline{x_{15}}\\
x_{15} = 13213 & x_{24} & \underline{x_{16}} & q \underline{x_{14}} + (q{-}1)x\\
x_{16} = 132132 & \eqref{16.1} & q \underline{x_{15}} + (q{-}1)x & \underline{x_{17}}\\
x_{17} = 1321323 & \underline{x_{38}} & 3 \cdot x & q \underline{x_{16}} + (q{-}1)x\\
\hline
x_{18} = 12321 & q \underline{x_{7}} + (q{-}1)x & \underline{x_{19}} & \underline{x_{20}}\\
x_{19} = 123212 & q x_{11} + (q{-}1)x & q \underline{x_{18}} + (q{-}1)x & \eqref{19.3} \\
x_{20} = 123213 & x_{28} & \underline{x_{21}} & q \underline{x_{18}} + (q{-}1)x\\
x_{21} = 1232132 & \eqref{21.1} & q \underline{x_{20}} + (q{-}1)x & \eqref{21.3} \\
\hline
x_{22} = 13231 & q \underline{x_{8}} + (q{-}1)x & \underline{x_{23}} & \underline{x_{24}}\\
x_{23} = 132312 & x_{27} & q \underline{x_{22}} + (q{-}1)x & \underline{x_{25}}\\
x_{24} = 132313 & q x_{15} + (q{-}1)x & \eqref{24.2} & q \underline{x_{22}} + (q{-}1)x\\
x_{25} = 1323123 & \eqref{25.1} & \eqref{25.2} & q \underline{x_{23}} + (q{-}1)x\\
\hline
x_{26} = 123231 & q \underline{x_{9}} + (q{-}1)x & \underline{x_{27}} & \underline{x_{28}}\\
x_{27} = 1232312 & q x_{23} + (q{-}1)x & q \underline{x_{26}} + (q{-}1)x & \underline{x_{29}}\\
x_{28} = 1232313 & q x_{20} + (q{-}1)x & \underline{x_{30}} & q \underline{x_{26}} + (q{-}1)x\\
x_{29} = 12323123 & \eqref{29.1} & \underline{x_{31}} & q \underline{x_{27}} + (q{-}1)x\\
x_{30} = 12323132 & \eqref{30.1} & q \underline{x_{28}} + (q{-}1)x & \underline{x_{32}}\\
x_{31} = 123231232 & \eqref{31.1} & q \underline{x_{29}} + (q{-}1)x & \underline{x_{33}}\\
x_{32} = 123231323 & \eqref{32.1} & x_{33} & q \underline{x_{30}} + (q{-}1)x\\
x_{33} = 1232312323 & \underline{x_{42}} & q x_{32} + (q{-}1)x & q \underline{x_{31}} + (q{-}1)x\\
\hline
x_{34} = 12312321 & q \underline{x_{13}} + (q{-}1)x & 232' \cdot x & \underline{x_{35}}\\
x_{35} = 123123213 & 2 \cdot x & \underline{x_{36}} & q \underline{x_{34}} + (q{-}1)x\\
x_{36} = 1231232132 & \eqref{36.1} & q \underline{x_{35}} + (q{-}1)x & \underline{x_{37}}\\
x_{37} = 12312321323 & \eqref{37.1} & 232' \cdot x & q \underline{x_{36}} + (q{-}1)x\\
\hline
x_{38} = 13213231 & q \underline{x_{17}} + (q{-}1)x & \underline{x_{39}} & \eqref{38.3} \\
x_{39} = 132132312 & 3 \cdot x & q \underline{x_{38}} + (q{-}1)x & \underline{x_{40}}\\
x_{40} = 1321323123 & \eqref{40.1} & \underline{x_{41}} & q \underline{x_{39}} + (q{-}1)x\\
x_{41} = 13213231232 & \eqref{41.1} & q \underline{x_{40}} + (q{-}1)x & \eqref{41.3} \\
\hline
x_{42} = 12323123231 & q \underline{x_{33}} + (q{-}1)x & \eqref{42.2} & \eqref{42.3} \\
\hline
\end{array}
\end{align*}
\caption{Multiplication table for $G_{24}$ and sorting}
\label{tableG24}
\end{table}

Some of the remaining entries are straightforward consequences of the braid relations:
\begin{align*}
  x_8.2 &= x_9 & 
  x_9.2 &=  q x_{8} + (q{-}1) x_9 \\
  x_{11}.1 &= x_{19} & 
  x_{19}.1 &= q x_{11} + (q{-}1) x_{19} \\
  x_{15}.1 &= x_{24} &
  x_{24}.1 &= q x_{15} + (q{-}1) x_{24} \\
  x_{20}.1 &= x_{28} &
  x_{28}.1 &= q x_{20} + (q{-}1) x_{28} \\
  x_{23}.1 &= x_{27} &
  x_{27}.1 &= q x_{23} + (q{-}1) x_{27} \\
  x_{32}.2 &= x_{33} &
  x_{33}.2 &= q x_{32} + (q{-}1) x_{33}
\end{align*}
and
\begin{align*}
  x_3.1 &= 2 \cdot x_3 &
  x_4.1 &= 3 \cdot x_4 &
  x_{12}.1 &= 232' \cdot x_{12} \\
  x_{10}.3 &= 2 \cdot x_{10} &
  x_{14}.2 &= 3 \cdot x_{14} &
  x_{34}.2 &= 232' \cdot x_{34} \\
  x_{13}.3 &= 2 \cdot x_{13} &
  x_{17}.2 &= 3 \cdot x_{17} &
  x_{37}.2 &= 232' \cdot x_{37} \\
  x_{35}.1 &= 2 \cdot x_{35} &
  x_{39}.1 &= 3 \cdot x_{39}
\end{align*}
Note that $x_{10}.3$ can also be computed as $x_{10}.3 =   x_{10}.1'3'131$,
there are similar relations for the other equations in this list.

The expression for $x_9.2$ follows obviously by expanding $2'$ in
$x_9.2' = x_8$.  Note that this can also be computed by applying
\eqref{eq:l.s} from Lemma~\ref{la:l.s} with $\alpha = \emptyset$ (the
empty word and identity of $H_0$) and $\beta = 0$.

After that, 19 entries in the table remain to be filled, and 
this is achieved through the following explicit computations.
 
\begin{multline}\label{16.1}
x_{16}.1 = 3'23 \cdot x_{16} 
\\ {}
- (q{-}1)(q3'232' \cdot x_7 + 3'23 \cdot x_9 + 3'23 \cdot x_{15} - q2' \cdot x_{18} - x_{24} - x_{26}) 
\\ {}
+ (q-1)^2(q3'232' \cdot x_5 + 3'23 \cdot x_8 - q2' \cdot x_{10} - x_{22}) 
\end{multline}
In order to get this formula, we start from the relation
\begin{align*}
  13'21'32'.1 = 3'23 \cdot 13'21'32', 
\end{align*}
which holds true
inside $B$. By expansion of the inverses we have $q^2 13'21'3
= x_{15} - (q{-}1)(x_8 + q 2' \cdot x_5)$
and therefore $x_{15} - (q{-}1)(q 2' \cdot x_5 + x_8)).2'1
=  3'23 \cdot (x_{15} - (q{-}1)(q 2' \cdot x_5 + x_8)).2'$.
Expanding $2'$ then yields (\ref{16.1}).

\begin{gather}\label{19.3}
x_{19}.3 = 232' \cdot x_{19} - (q{-}1)(232' \cdot x_{11}  -  x_{12})
\end{gather}
We start from the relation
\begin{align*}
  123121'.3 = 232' \cdot 123121', 
\end{align*}
which holds true in
$B$. By expanding $1'$ it can be rewritten $(x_{19} - (q{-}1)
x_{11}).3 = 232' \cdot (x_{19} - (q{-}1) x_{11})$, from which we get
(\ref{19.3}).

\begin{multline}\label{21.3}
  x_{21}.3 = 232' \cdot x_{21}
-(q{-}1)(
232' \cdot x_{20}
- 2332' \cdot x_{18}
- 2 \cdot x_{12}
+ 23 \cdot x_{11}
) 
\\ {} 
+ (q-1)^2 (23 - 2332') \cdot x_{10}.
\end{multline}
This can be computed as $x_{21}.3 = x_{21}.2'3'23232'$,
or from the relation
\begin{align*}
  1232'13'2'.3 = 232' \cdot 1232'13'2'.
\end{align*}

\begin{gather}\label{24.2}
  x_{24}.2 = 3'23 \cdot x_{24} 
- (q{-}1)(
3'23 \cdot x_{22}
- x_{23}
+ q 3'232' \cdot x_{10}
- q2' \cdot x_{11}
) 
\end{gather}
This can be computed as $x_{24}.2 = x_{24}.1'2'121$, or from the relation
\begin{gather*}
  13'21'31.2 = 3'23 \cdot 13'21'31.
\end{gather*}

\begin{align}
  \label{25.1}
x_{25}.1 &= q^{-2} 23 \cdot x_{36} - (q{-}1)(q^{-2} 23 \cdot x_{35} + q^{-1} 3'23 \cdot x_{34})
\\\label{36.1}
x_{36}.1 &= q^3 3'2' \cdot x_{25} 
+ (q{-}1)(x_{36} + q2' \cdot x_{35} + q^2 23'2' \cdot x_{13})
\end{align}
For the first equation, we use $1323123.1 = 23 \cdot 123123213'2'$ and expand $3'2'$.
The second one is a consequence, multiplying on the right the first one by $1$,
as an application of Lemma~\ref{la:l.s}.

\begin{align}\label{37.1}
  x_{37}.1 &= q^3 3'2' x_{29}
+ (q{-}1) (
x_{37}
+ q 232'2' \cdot x_{35}
+ q^2 x_{13}
) \\\label{29.1}
x_{29}.1 &= q^{-2} 23 \cdot x_{37}
- (q{-}1)(
q^{-1} 23 \cdot x_{34} 
+ q^{-2} 323 \cdot x_{35}
)
\end{align}
The first equation can be computed as $x_{37}.1 = x_{37}.3'1'313$.
The second follows by using the second form of Lemma~\ref{la:l.s}.

\begin{gather}\label{25.2}
  x_{25}.2 = 3'23 \cdot x_{25} - (q{-}1)(3'23 \cdot x_{12} - x_{13})
\end{gather}
By expanding $3'$ we get $13'23123 = x_{25} - (q{-}1) x_{12}$. Then,
multiplying on the right by 2 and using the relation
\begin{gather*}
  13'23123.2 = 3'23 \cdot 13'23123
\end{gather*}
we get (\ref{25.2}).

\begin{multline}
   \label{40.1}
x_{40}.1 = q23 \cdot x_{21} \\
- (q{-}1)(
q 23 \cdot x_{20}
+ q 23 \cdot x_{19}
- q 2'323 \cdot x_{12}
- q^{-2} 223 \cdot x_{37}
-q^{-1} 3'223 \cdot x_{36}
) \\
+ (q{-}1)^2(
q 23 \cdot x_{18} 
- q 323 \cdot x_{10}
- (q^{-1} 3'223 + q^{-2} 2323) \cdot x_{35}
- (3'3'223 + q^{-1} 232) \cdot x_{34}
)
\end{multline}
\begin{multline}\label{21.1}
x_{21}.1 = 3'2' \cdot x_{40} \\
- (q{-}1)(
3' \cdot x_{29} 
- x_{28} 
+ q 3'2'3'2 \cdot x_{25} 
- x_{21} 
+ q 3'2'3 \cdot x_{12}
- q x_{11}
) \\
+ (q{-}1)^2(
q 232' \cdot x_5
- q x_7
- x_{20}
)
\end{multline}
We start from $1'3'2'1323123.1 = 23 \cdot 123213'2' $ and expand the  $1'3'2'$ on the LHS. This provides (\ref{40.1}).
Then (\ref{21.1}) is obtained by multiplying (\ref{40.1}) by $1$ on the right and $q^{-1}3'2'$ on the left, as an application of Lemma~\ref{la:l.s}.

Computing $x_{30}.1 = x_{30}.2'1'212$ yields:
\begin{multline}\label{30.1}
  x_{30}.1 = 3'2' \cdot x_{41}
- (q{-}1)(
3' \cdot x_{31}
- x_{30}
- q x_{20}
-q^2 x_{10}
)
\\
+ (q{-}1)^2(
q x_{11}
+ q^2 23'2' \cdot x_7
- q^2 x_5
)
+ \left(
(q{-}1)^3q^{-1} (232') - (q{-}1)q3'3'2
\right) \cdot x_{25} 
\\
+ \left(
(q{-}1)^3 (3' + q^{-1} 2) - (q{-}1)^2 q^{-1} 2323' - (q{-}1) q 3'2'3 
\right) \cdot x_{13}
\\
+ \left(
(q{-}1)^4 q^{-1} (3-232') + (q{-}1)^2 q 3'3'2 
\right) \cdot x_{12}.
\end{multline}

Computing $x_{31}.1 = x_{31}.2'1212'$ yields:
\begin{multline}\label{31.1}
  x_{31}.1 = 3 \cdot x_{31}
- (q{-}1)(
q 2'3 \cdot x_{25} 
+ 3 \cdot x_{29}
- q^{-1} 232' \cdot x_{36}
- q^{-2} 23 \cdot x_{37}
)
\\
- (q{-}1)^2(
(q^{-2} 323 + q^{-1} 232') \cdot x_{35}
+ q^{-1} 23 \cdot x_{34}
+ 3 \cdot x_{13}
)
\end{multline}

Computing $x_{38}.3 = 1'3'131$ yields:
\begin{multline}\label{38.3}
  x_{38}.3 =  3'23 \cdot x_{38} 
\\
- (q{-}1)(
q^2 3'23 \cdot x_6
- q^2 x_8
+ q 3'23 \cdot x_{18}
- q x_{20}
+ 3'232 \cdot x_{26}
- 2 \cdot x_{28}
)
\\
+ (q{-}1)^2(
q^2 3'23 \cdot x_3
- q^2 x_5
+ 3'223 \cdot x_{22}
-3'23 \cdot x_{24}
)
- (q{-}1)^3(3'223 -3'232) \cdot x_{10}
\end{multline}

Computing $x_{41}.1 = x_{41}.2'1212'$ yields:
\begin{multline}\label{41.1}
    x_{41}.1 = q 23 \cdot x_{30} 
- (q{-}1)(
q^3 23 \cdot x_5
+ q 23 \cdot x_{28}
- 23 \cdot x_{31}
)
\\
+ (q{-}1)^2(
q^{-2} 223 \cdot x_{37}
- 23 \cdot x_{29}
- q 3 \cdot x_{25}
- q 23 \cdot x_{19}
- q^2 3'23 \cdot x_{18}
- 3'223 \cdot x_{12}
)
\\
-(q{-}1)^3 23 \cdot x_{13}
+ ((q{-}1)q^{-1}332 + (q{-}1)^2q^{-1} 23'23) \cdot x_{36}
\\
- ((q{-}1)^2 q^{-1}332 + (q{-}1)^3 q^{-1} 23'23 + (q{-}1)^3q^{-2} 2323) \cdot x_{35}
\\
+ ((q{-}1) q 3 - (q{-}1)^3 q 3' - (q{-}1)^4 3'3'23 - (q{-}1)^3 q^{-1} 232) \cdot x_{34}
\end{multline}

Computing $x_{41}.3 = x_{41}.2'3'2'3232$ yields:
\begin{multline}\label{41.3}
  x_{41}.3 = 3'23 \cdot x_{41}
\\
+ (q{-}1)(
2 \cdot x_{33}
- 2323' \cdot x_{31}
+ q^2 232' \cdot x_{20}
- q^2 23 \cdot x_{18}
+ q^4 x_5
- q^4 3'23 \cdot x_3
)
\\
+ (q{-}1)^2 q(
3'23 \cdot x_{25}
- 3'223 \cdot x_{23}
+ (2332' - 23) \cdot x_{19}
+ (2 - 3'23) \cdot x_{13}
)
\\
+ (q{-}1)^2q^2(
- 3'23 \cdot x_9
+ 3'223 \cdot x_7
)
- (q{-}1)^3 q 3'23 \cdot x_{12}
+ ((q{-}1)^3 3323 - (q{-}1)^4 232) \cdot x_{11}
\end{multline}

Expanding $3'$ inside the braid relation $123'2313'23.1 = 2 \cdot 123'2313'23$
yields:
\begin{multline}\label{32.1}
  x_{32}.1 = 2 \cdot x_{32} 
- (q{-}1)(
q^2 23 \cdot x_5
- q^2 3 \cdot x_{10}
+ 2 \cdot x_{29} 
- q^{-2} 23 \cdot x_{37} 
)\\
+ (q{-}1)^2(
(q\emptyset - q3'2) \cdot x_{12} 
- q^{-1} 23 \cdot x_{34} 
- q^{-2} 323 \cdot x_{35} 
)
\end{multline}

Computing $x_{42}.2 = x_{42}.1'2'121$ yields:
\begin{multline}\label{42.2}
  x_{42}.2 = 2 \cdot x_{42}
- (q{-}1) (
23 \cdot x_{31}
- 223 \cdot x_{29}
)
\\
+ (q{-}1)^2(
q^{-1}(3'23 - 3) \cdot x_{37}
+ (32 - 3'223) \cdot x_{34}
+ q^2 3 \cdot x_{19}
- q^2 23 \cdot x_{18}
)
\\
- ((q{-}1)^3 q^{-2} 323 + (q{-}1)^2 q^{-1} 2232') \cdot x_{36}
\\
+ ((q{-}1)^3 q^{-2} 2323 + (q{-}1)^2q^{-1} 22232') \cdot x_{35}
\end{multline}

Computing $x_{42}.3 = x_{42}.1'3'131$ yields:
\begin{multline}\label{42.3}
  x_{42}.3 = 3 \cdot x_{42}
- (q{-}1)(
q 23 \cdot x_{27}
- q 3'23 \cdot x_{29}
+ q^2 3 \cdot x_{23}
- q^2 x_{25}
)
\\
+ (q{-}1)^2(
q^{-2} (232 - 323) \cdot x_{37}
+ q^{-1} (23 - 2'323) \cdot x_{36}
+ 3'2232' \cdot x_{35}
- 2232' x_{34}
)
\end{multline}

In summary, three different types of operations are used to fill in an
entry.  It is either derived from a suitable relation in the braid
group, or it is derived by replacing the acting generator $s$ by a
word $w$ in the generators (so that $s^{-1}w = 1$ is equivalent to a
defining relation of $W$; this is called a \emph{cyclic expansion} of
$s$ in the next section), or it is obtained by an application of
Lemma~\ref{la:l.s}, that is by \emph{reverting an edge} in the coset
graph.

A systematic search for suitable relations is computationally
expensive and not guaranteed to succeed.  Cyclic expansions and edge
reversals can simply be applied on a trial and error basis.  It turns
out that the operations of cyclic expansion and edge reversal are
sufficient to complete the coset tables for the algebras in
Theorem~\ref{th:maintheo}, provided we add only a few defining relations to the
usual presentations of the braid groups. In the next section we will formulate this
as a strategy.

\section{Algorithm}
\label{sec:algo}

The observations from the example in the previous section can be used
to automate the entire procedure.  This leads to the following
algorithm.  The strategy used is similar to a Todd-Coxeter procedure.
Here, however, first all the cosets are defined all at once (using the information on
cosets in the finite group), and only then cyclic conjugates of the
relations are used to fill missing entries in the table.

By this we mean, that every relation is used to express a generator as
a word in all possible ways.  The words obtained in this way, for a
generator $s \in S$ form the set $R_s$ of \emph{cyclic expansions} of
$s$.

For example, the relation $121=212$, gives cyclic expansions
\begin{align*}
  1&\to 2121'2' &   2&\to 1212'1' \\
  1&\to 2'1'212 &   2&\to 1'2'121 \\
  1&\to 2'1212' &   2&\to 1'2121' 
\end{align*}
i.e., $R_1 = [2121'2',2'1'212,2'1212']$ and
$R_2 = [1212'1',1'2'121,1'2121']$.

The algorithm then proceeds as follows.

0. Compute the lists $R_s$, $s \in S$, of cyclic expansions. \\
1. Compute coset representatives and a spanning tree
as in Section~\ref{sec:coset-graph}, and fill the corresponding entries of the table.\\
2. for each $s \in J$, set the entry $x_1.s = s \cdot x_1$,
where $x_1$ is the trivial coset, represented by the empty word.\\
3. loop over missing entries $x.s$, try to compute $x.s$ as $x.w$ for
$w \in R_s$, or by an application of Lemma~\ref{la:l.s} if possible, until no more
entries can be filled.

Note that step 2 corresponds to filling the subgroup tables in
a Todd-Coxeter procedure.
The order in which the different computations in step 3
are tried is not relevant.

In our implementation of the algorithm, Lemma~\ref{la:l.s} is only
applied, if the coefficient $\alpha$ is obviously invertible, i.e. if
it is a product of a power of $q$ and an element of the natural basis
of $H_0$.  This is sufficient for the purpose of proving
Theorem~\ref{th:maintheo}.  In general, it is indeed a nontrivial task
to identify and invert invertible elements of the Hecke algebra $H_0$.

In the example of $G_{24}$ the algorithm completes after the following
sequence of steps.  Here, an expression like $\revert(x_8.2)$
stands for the result of applying Lemma~\ref{la:l.s} to the known
entry $x_8.2$.

{
\begin{align*}
x_{ 3}.1 &= x_{ 3}.2'1'212 &                           & \dotsc \\  
x_{ 4}.1 &= x_{ 4}.3'1'313 &                      x_{34}.2 &= x_{34}.1'2'121 \\                             
x_{ 8}.2 &= x_{ 8}.3'2'3'2323 &                   x_{35}.1 &= x_{35}.3'1'313 \\                             
x_{ 9}.2 &= \revert(x_{ 8}.2) &                   x_{37}.1 &= x_{37}.3'2'1'3'2'1'232'123123 \\              
x_{10}.3 &= x_{10}.1'3'131 &                      x_{29}.1 &= \revert(x_{37}.1) \\                          
x_{11}.1 &= x_{11}.2'1'212 &                      x_{37}.2 &= x_{37}.3'2'3'2323 \\                          
x_{19}.1 &= \revert(x_{11}.1) &                   x_{39}.1 &= x_{39}.2'1'212 \\                             
x_{12}.1 &= x_{12}.3'2'1'3'2'1'232'123123 &       x_{25}.1 &= x_{25}.3'1313' \\                             
x_{13}.3 &= x_{13}.2'3'2'3232 &                   x_{36}.1 &= \revert(x_{25}.1) \\                          
x_{14}.2 &= x_{14}.1'2'121 &                      x_{25}.2 &= x_{25}.1212'1' \\                             
x_{15}.1 &= x_{15}.3'1'313 &                      x_{31}.1 &= x_{31}.2'1212' \\                             
x_{24}.1 &= \revert(x_{15}.1) &                   x_{32}.1 &= x_{32}.3'2'1'3'2'1'232'123123 \\              
x_{17}.2 &= x_{17}.3'2'3'2323 &                   x_{42}.2 &= x_{42}.1'2'121 \\                             
x_{19}.3 &= x_{19}.1'3131' &                      x_{42}.3 &= x_{42}.1'3'131 \\                             
x_{20}.1 &= x_{20}.3'1'313 &                      x_{24}.2 &= x_{24}.3'23232'3' \\                          
x_{28}.1 &= \revert(x_{20}.1) &                   x_{40}.1 &= x_{40}.3'2'1'23'2'12312313'2' \\              
x_{21}.3 &= x_{21}.2'3'23232' &                   x_{21}.1 &= \revert(x_{40}.1) \\                          
x_{23}.1 &= x_{23}.2'1'212 &                      x_{41}.1 &= x_{41}.2'1212' \\                             
x_{27}.1 &= \revert(x_{23}.1) &                   x_{30}.1 &= \revert(x_{41}.1) \\                          
x_{32}.2 &= x_{32}.3'2'3'2323 &                   x_{41}.3 &= x_{41}.1313'1' \\                             
x_{33}.2 &= \revert(x_{32}.2) &                   x_{16}.1 &= x_{16}.2'1212' \\                      
& \dotsc    &                                   x_{38}.3 &= x_{38}.1'3'131                         
\end{align*}
}

A similar sequence of steps proves the theorem in the remaining number of cases.

\section{Proof of the main theorem}
\label{sec:proof}

The proof of the theorem is then obtained by applying the above algorithm
to each 2-reflection group having a single conjugacy class of reflections, together
with a presentation of the group. We start with the groups of rank 2, where we use the
standard presentations of \cite{BMR}.
In the `ordering'
column we put the ordered set $\hat{S}$ used to build the spanning tree, as in Section~\ref{sec:coset-graph}. We use parenthesis as in $(\mathbf{1},2,3)$ in order to indicate
that we use lexicographic ordering, while we use square brackets as in $[\mathbf{1},2,3]$ to
indicate that we use the modified version of the algorithm that groups cosets into double cosets. In each case,
the digit in bold font indicates (the generator of) the parabolic subgroup which is used -- in general, the digits in bold font will be the generators forming the subset $J$ of Section~\ref{sec:coset-graph}. The other columns
indicate the Coxeter type of the parabolic subgroup $W_0$, the order of the group $W$
and the number of cosets inside $W/W_0$. Finally, the last column contains a checkmark if the
algorithm succeeded, and if not it contains a cross together with the number of entries
that remained empty at the end of the process.

$$
\begin{array}{cccrrcl}
\hline
 W & \mbox{relations} & W_0 & |W| & |W/W_0| & \mbox{ordering} & \mbox{result} \\
 \hline
 \hline
 G_{12} & 1231=2312 &A_1 & 48 & 24 & (\mathbf{1},2,3)& \checkmark \\
 	& 1231 = 3123 & & & &  (1,\mathbf{2},3) & \checkmark \\
 	&  & & & &  (1,2,\mathbf{3})  & \checkmark \\
	&  & & & &  [\mathbf{1},2,3]  & \checkmark \\
		&  & & & &  [1,\mathbf{2},3]  &  \Fail (9) \\
		&  & & & &  [1,2,\mathbf{3}]  & \checkmark \\
\hline	
 G_{22} & 12312=23123 &A_1 &  240 & 120 &(\mathbf{1},2,3)  & \checkmark  \\
 	& 23123 = 31231& & & & (1,\mathbf{2},3)  &\checkmark \\
 	& 12312 = 31231& & & & (1,2,\mathbf{3})  &\checkmark\\
			&  & & & & [\mathbf{1},2,3]  & \Fail (25) \\
			&  & & & & [1,\mathbf{2},3]  &  \Fail (26)\\
			&  & & & & [1,2,\mathbf{3}]  &  \Fail(25) \\
\hline
\end{array}
$$

Of course the choice of a parabolic subgroup matters, in that the completion
of the algorithm proves that $H$ is a free $H_0$-module, for the given choice of $W_0 \subset W$.
The choice of ordering also matters, in that it proves that the specific list of words
in the generators induced by this ordering provides a basis of the free module $H_0$-module $H$.
For instance, let us consider the case where $W$ has type $G_{12}$. In case of $(1,2,3)$, that is the standard lexicographic process
attached to the ordering $(1,2,3)$, the basis of $H$ as a $H_0$-module
that we obtain is

$$
\begin{array}{l}\emptyset,  2 ,  3,  2 1 ,  2 3 ,  3 1 ,  3 2,  2 1 2 , 
   2 1 3 ,  2 3 1 ,  2 3 2 ,  3 1 2 ,  3 1 3 , 
   3 2 1 ,  3 2 3 ,   \\  2 1 2 1 ,2 1 2 3 ,  2 1 3 1 , 
   2 3 2 3 ,  3 1 3 1 ,  3 2 3 2 ,  2 1 2 1 2 , 
   2 1 2 3 2,  2 1 3 1 3 
   \end{array}
   $$
 while for
 $[1,2,3]$ it is
$$
\begin{array}{l} \emptyset, 2 ,  2 1 ,  3 ,  3 1 ,  2 3 ,  2 3 1 ,  2 1 2 , 
   2 1 2 1 ,  2 1 3 ,  2 1 3 1 ,  3 2 ,  3 2 1 , 
   3 1 2 ,  3 1 2 1 ,\\  3 1 3 ,  3 1 3 1 ,  2 3 2 , 
   2 3 2 1 ,  2 1 2 1 2 ,  2 1 2 1 3 ,  2 1 2 1 3 1 , 
   3 1 2 1 2 ,  3 1 2 1 2 1  .
   \end{array}
 $$
Therefore, every checkmark in the table, for a given group, corresponds to a new result,
distinct from the other ones -- but of course only one checkmark is needed in order to
prove Theorem~\ref{th:maintheo} for this group.

We turn to the cases of rank 3 and 4. In the first two cases,
we slightly changed the standard presentation of \cite{BESSISMICHEL}. It is easily checked
that the non-Coxeter relation we use for $G_{24}$ is 
equivalent to the standard one $2 3 1 2 3 1 2 3 1 = 3 2 3 1 2 3 1 2 3$,
while the one we use for $G_{27}$ is equivalent to the
standard one $323123123123 =
 231231231232$.
In the case of $G_{29}$, we do not need any additional relation to the standard presentation.
In the process, we however noticed that the companion relation $423423 = 234234$, which
holds inside the reflection group but \emph{not} in the braid group, admits
a pretty-looking counterpart $42'34'23' = 2'34'23'4$, which holds inside $B$ and might be useful in other contexts.

$$
\begin{array}{cccrrcl}
\hline
 W & \mbox{relations} & W_0 & |W| & |W/W_0| & \mbox{ordering} & \mbox{result} \\
 \hline
 \hline
G_{24} & 121 = 212 &B_2 & 336 & 42 & (1,\mathbf{2},\mathbf{3}) & \checkmark \\
	& 131=313 & & & & [1,\mathbf{2},\mathbf{3}] & \checkmark\\
	& 3232=2323 & \\
	& 12312313' & \\
	& = 232'12312 \\
\hline
G_{27} & 121=212 &B_2  & 2160& 270& (1,\mathbf{2},\mathbf{3}) & \Fail(136) \\
	& 131=313 & & & & [1,\mathbf{2},\mathbf{3}] & \checkmark \\
	& 3232=2323 & \\
 & 232'1231231 & \\
 & = 12312313'23 & \\

 \hline
G_{29} &  121=212 &B_3 & 7680 & 160& (\mathbf{1},\mathbf{2},\mathbf{3},4) & \checkmark \\ 
	& 242 = 424  & & & & [\mathbf{1},\mathbf{2},\mathbf{3},4] & \checkmark \\
	& 343=434 & \\
	& 2323 = 3232 \\
	& 13 = 31, 14=41 \\
	& 432432 = 324324 \\
\hline
\end{array}
$$

The case of the remaining group of rank 4 is somewhat special, in that it involves
two new generators instead of one, and because we needed to introduce a number
of extra relations so that our algorithm manage to fill all the entries of the table. Moreover,
there is no really `natural' ordering of the vertices in this case.  We got the following results.

$$
\begin{array}{cccrrcl}
\hline
 W & \mbox{relations} & W_0 & |W| & |W/W_0| & \mbox{ordering} & \mbox{result} \\
 \hline
 \hline
G_{31} & 141=414, 15=51 &A_3 &46080 & 1920 & (1,\mathbf{2},3,\mathbf{4},\mathbf{5}) & \checkmark \\
	& 242=424, 252=525 & & & & (\mathbf{5},1,\mathbf{2},3,\mathbf{4}) & \checkmark \\
	&34=43, 535 =  353 & & & & (\mathbf{4},\mathbf{5},1,\mathbf{2},3) & \checkmark \\
	& 45=54, 123=231 & & & &(3,\mathbf{4},\mathbf{5},1,\mathbf{2}) & \checkmark \\
	& 231=312,123=312 & & & &(\mathbf{2},3,\mathbf{4},\mathbf{5},1) & \checkmark \\
	& \mathcal{R}_{31} & & & &(\mathbf{2},\mathbf{4},\mathbf{5},1,3) & \checkmark \\
	&  & & & &(\mathbf{2},\mathbf{4},\mathbf{5},3,1) & \checkmark \\
	&  & & & & [1,\mathbf{2},3,\mathbf{4},\mathbf{5}] & \Fail (2633) \\
	\hline
\end{array}
$$
In this table, the additional relations are :
$$
\begin{array}{lcl}
\mathcal{R}_{31} &:& 124124=412412, 235235=523523, 232'523=5232'52, 1242'12=242'124 \\
& &  212'5235=52352'12, 232'4124=41242'32 \\
\end{array}
$$

 Finally, the group $G_{33}$ has a standard presentation in which a parabolic
Coxeter subgroup of type $A_4$ naturally shows up. The Coxeter relations are symbolized by the diagram
$$
\begin{tikzpicture}[dba/.style={double,double equal sign distance}]
\draw (0,0) node (1) {$1$};
\draw (1)++(1,0) node (2) {$2$};
\draw (2)++(.5,1) node (3) {$3$};
\draw (2)++(1,0) node (4) {$4$};
\draw (2)++(2,0) node (5) {$5$};
\draw (1)--(2);
\draw (2)--(3);
\draw (2)--(4);
\draw (3)--(4);
\draw (4)--(5);
\begin{pgfonlayer}{background}
\node [fill=black!5,fit=(current bounding box.north west) (current bounding box.south east)] {};
\end{pgfonlayer}
\end{tikzpicture}
$$
and there is one additional relation $423423 = 342342$. This
group $G_{33}$
also contains a parabolic subgroup of type $D_4$ which cannot be seen
in this presentation. 
In \cite{BESSISMICHEL}, Bessis and Michel propose an alternative presentation of the
braid group for $G_{33}$, given (up to an harmless swapping of letters) by the Coxeter relations described by the
following diagram,
$$
\begin{tikzpicture}[dba/.style={double,double equal sign distance}]
\draw (0,0) node (1) {$s$};
\draw (1)++(1,0) node (2) {$t$};
\draw (2)++(.5,1) node (3) {$u$};
\draw (2)++(.5,-1) node (4) {$w$};
\draw (2)++(1,0) node (5) {$v$};
\draw (1)--(2);
\draw (2)--(3);
\draw (2)--(4);
\draw (2)--(5);
\draw (3)--(5);
\draw (4)--(5);
\begin{pgfonlayer}{background}
\node [fill=black!5,fit=(current bounding box.north west) (current bounding box.south east)] {};
\end{pgfonlayer}
\end{tikzpicture}
$$
together with the relation $wvutwv = vutwvu$.  This presentation is deduced from the previous one
by the relations $s = 1$, $t = 2$, $u = 4$, $v = 3$, $w = 3 4 5 4' 3'$.
From these presentations and the corresponding parabolic subgroups we get the following results, which in particular conclude the proof of Theorem~\ref{th:maintheo}.

\bigskip

$$
\begin{array}{cccrrcl}
\hline
 W & \mbox{relations} & W_0 & |W| & |W/W_0| & \mbox{ordering} & \mbox{result} \\
 \hline
 \hline
G_{33} & 121=212, 323=232&A_4  & 51840 & 432 & (\mathbf{1},\mathbf{2},3,\mathbf{4},\mathbf{5}) & \checkmark\\ 
	& 424=242, 434=343 \\
	& 454=545, 13=31 \\
	&14=41, 15=51 \\
	&  25= 52,35=53 \\
	& 423423 = 342342 \\
	& 342342=234234 \\
\hline	
G_{33} & 121=212,454=545 &D_4  & 51840 & 270 &(\mathbf{1},\mathbf{2},\mathbf{3},4,\mathbf{5})  & \checkmark \\
	& 13=31, 14=41 & & & & [\mathbf{1},\mathbf{2},\mathbf{3},4,\mathbf{5}] & \checkmark \\
	& 15=51, 232=323 &\\
	& 242=424, 252=525 &\\
	& 343=434, 35=53 &\\
	& 543254=432543 &\\
	& \mathcal{R}_{33}^D & \\
\hline
\end{array}
$$

In this table, 
the additional relations are :
$$
\begin{array}{lcl}
\mathcal{R}_{33}^D &:& 324324=432432, 324324=243243,432432=243243,\\ & & 4215421 = 252'421542,425432 = 32543245'\\
\end{array}
$$

Altogether this completes the proof of Theorem~\ref{th:maintheo} for all the 2-reflection
groups, but the case of the largest one, $G_{34}$. This case presents the following obstacles. First of all, because it is so big, we need to use sparse vectors in order
to keep the amount of memory needed to store all entries reasonable. Then, we
know that $G_{34}$ admits two maximal parabolic Coxeter subgroups,
of type $A_5$ and $D_5$, but unfortunately they both have too large index
in $G_{34}$, precisely $54432$ and $20412$, respectively. As a consequence, our
program spends too much time trying to fill in the table : after a few months
of computations it appears to be very far away from the goal. Because of that,
we were not able to prove Proposition~\ref{propGeneH0} in the way we did
before.

We used instead the following `two-layers' strategy. The group $G_{34}$, described by the diagram
below and the additional relation $423423 = 342342$, has a maximal parabolic subgroup $W_0$ of type $G_{33}$, that we use in replacement to the Coxeter subgroups.

$$
\begin{tikzpicture}[dba/.style={double,double equal sign distance}]
\draw (0,0) node (1) {$1$};
\draw (1)++(1,0) node (2) {$2$};
\draw (2)++(.5,1) node (3) {$3$};
\draw (2)++(1,0) node (4) {$4$};
\draw (2)++(2,0) node (5) {$5$};
\draw (2)++(3,0) node (6) {$6$};
\draw (1)--(2);
\draw (2)--(3);
\draw (2)--(4);
\draw (3)--(4);
\draw (4)--(5);
\draw (5)--(6);
\begin{pgfonlayer}{background}
\node [fill=black!5,fit=(current bounding box.north west) (current bounding box.south east)] {};
\end{pgfonlayer}
\end{tikzpicture}
$$

We let $H_0$
denote the corresponding parabolic Hecke algebra, and try to prove that
$H$ is a $H_0$-module of rank $|W/W_0| = 756$. By the same arguments as before,
this would imply the conjecture for $G_{34}$, and subsequently that $H$ is a free
$H_0$-module of rank $756$.

We define in the usual way a list of coset representatives as words in
the generators $x_1,\dots,x_{756}$, and use the same algorithm as before.
For this we need to use multiplication inside $H_0$. We know how to perform it by
the previous computations, as the table we filled in in the case of $G_{33}$
described $H_0$ as a $(H_{00},  H_0)$-bimodule, where $H_{00}$ is the
parabolic subalgebra of type $A_4$ that we used in this case. We then launched
our algorithm associated with the ordering $(1,2,3,4,5,6)$ and with the Coxeter relations together with $423423 = 342342$
and $ 342342 = 234234$.  This algorithm \emph{almost} completed in a few
hours, in the sense that almost all entries of the table were filled in by then.
At the very end however, the multiplications inside $H_0$
took much more time, because of the size of the entries. Keeping it running,
our program finally completed in 3 weeks, thus proving the remaining $G_{34}$ case.

The interested reader
will find the code we used at the url \url{http://www.lamfa.u-picardie.fr/marin/GGGGcode-en.html}.

\end{document}